\documentclass[english,openary,11pt]{article}
\usepackage{cmap}
\usepackage[english]{babel}
\usepackage{graphicx}
\DeclareGraphicsExtensions{.png}
\usepackage{floatflt}
\usepackage{amsfonts}                              
\usepackage{amsthm}                          
\usepackage{amsmath}
\usepackage{mathtools}                 
\usepackage{gensymb}
\usepackage{amssymb,amsfonts}
\usepackage[usenames]{color}
\usepackage{wrapfig}
\usepackage{url}
\usepackage{cmap}
\usepackage{graphicx}
\usepackage{multirow}
\usepackage{comment}
\usepackage{caption}
\usepackage{subcaption}
\usepackage{float}
\usepackage{diagbox}
\usepackage{enumerate}
\usepackage{titling}
\usepackage{makecell}
\usepackage{booktabs}
\usepackage{amsthm}
\usepackage{bm}
\usepackage{subcaption}

\usepackage[hidelinks,hyperindex,breaklinks]{hyperref}
\usepackage{enumitem}
\usepackage[nottoc,numbib]{tocbibind} 
\setlist[itemize]{noitemsep, topsep=3pt, leftmargin=18pt}
\setlist[enumerate]{noitemsep, topsep=3pt, leftmargin=22pt}
\usepackage[title]{appendix}
\usepackage{breakcites}

\setlength{\belowcaptionskip}{-5pt}

\allowdisplaybreaks

\usepackage{tikz}
\usetikzlibrary{positioning, fit, arrows, arrows.meta,
calc, decorations.pathreplacing, backgrounds}
\usepackage{pgfplots}

\usepackage{soul}               
\setul{}{0.6pt}
\setstcolor{blue}


\makeatletter
\renewcommand\section{\@startsection {section}{1}{\z@}%
                                   {0ex \@plus 0ex \@minus 0ex}%
                                   {0.5ex \@plus0ex}%
                                   {\normalfont\LARGE\bfseries}}
\renewcommand\subsection{\@startsection{subsection}{2}{\z@}%
                                     {0ex\@plus 0ex \@minus 0ex}%
                                     {0.5ex \@plus 0ex}%
                                     {\normalfont\Large\bfseries}}
\renewcommand\subsubsection{\@startsection{subsubsection}{3}{\z@}%
                                     {0ex\@plus 0ex \@minus 0ex}%
                                     {0.5ex \@plus 0ex}%
                                     {\normalfont\large\bfseries}}
\makeatother

\makeatletter
\def\thm@space@setup{%
  \thm@preskip=2mm
  \thm@postskip=\thm@preskip 
}
\makeatother

\newcommand{\red}[1]{\textcolor{red}{#1}}

\usepackage[titles]{tocloft}
\setlength{\cftbeforesecskip}{-1pt}
\setlength{\cftbeforesubsecskip}{-1pt}

\newcommand\xqed[1]{%
  \leavevmode\unskip\penalty9999 \hbox{}\nobreak\hfill
  \quad\hbox{#1}}
\newcommand\demo{\xqed{$\triangle$}}
\newcommand{\demoo}{\tag*{$\triangle$}}

\usepackage[left=1.65cm,right=1.65cm,
    top=1.6cm,bottom=1.6cm,bindingoffset=0cm]{geometry}

\theoremstyle{definition}
\newtheorem{remark}{Remark}
\newtheorem{conjecture}{Conjecture}
\newtheorem{example}{Example}
\newtheorem{definition}{Definition}

\newtheorem{theorem}{Theorem}
\newtheorem{lemma}{Lemma}

\newtheorem{proposition}{Proposition}
\newtheorem{question}{Question}

\newtheorem*{remark-conjecture}{Remark-conjecture}

\newcommand{\R}{\mathbb{R}}
\newcommand{\N}{\mathbb{N}}

\begin{document}
\setlength{\abovedisplayskip}{5pt}
\setlength{\belowdisplayskip}{5pt}
\setlength\abovedisplayshortskip{5pt}
\setlength\belowdisplayshortskip{5pt}

\author{Ivan Novikov, Université Paris~1 Panthéon-Sorbonne}
\title{\vspace{-2.1cm} Asymptotic Value in Zero-Sum Stochastic Games with \\ Vanishing Stage Duration and Public Signals
\vspace{-0.42cm}}
\date{}

\maketitle

\vspace{-1.8cm}

\begin{abstract}

We study $\lambda$-discounted zero-sum games as the discount factor $\lambda$ approaches $0$ (that is, the players are more and more patient), in the context of games with stage duration. In stochastic games with stage duration $h$, players act at times $0, h, 2h$, and so on. The payoff and leaving probabilities are proportional to $h$. When $h$ tends to $0$, such discrete-time games approximate games played in continuous time. The asymptotic behavior of the values (when both $\lambda$ and $h$ tend to $0$) has already been studied for stochastic games with full state observation and for state-blind games.

We consider the same question for the case of stochastic games with deterministic public signals on the state. We construct a stochastic game with public signals, with no asymptotic value (as the discount factor $\lambda$ goes to $0$) if the stage duration is $1$, but with an asymptotic value when the stage duration $h$ and the discount factor $\lambda$ both tend to $0$. Informally, this means that the asymptotic value in discrete time does not exist, whereas it does exist in continuous time. This situation cannot occur in stochastic games with full state observation. 

\vspace{0.4cm}

\textbf{Keywords: Zero-sum stochastic games, Public signals, Shapley operator, Varying stage duration, Continuous-time Markov games, Asymptotic value}

\textbf{MSC2020 : 60J27, 91A05, 91A10, 91A15, 91A28, 91A50}
\end{abstract}

\tableofcontents

\newpage


\textbf{Notation:}
\begin{itemize}
	\item $\N^*$ is the set of all positive integers; $\N_0 := \N^* \cup \{0\};$
	\item $\R_+ := \{x : x \in \R \text{ and } x \ge 0\}$;
	\item If $C$ is a finite set, then 
	$\Delta (C)$ is the set of probability measures on $C$;
  \item If $C$ is a set, then $\Delta_f(C)$ is the set of all probability  measures on $C$ with finite support;
	\item If $X$ is a finite set, and $f,g : X \to \R$ are two functions, then
	$\langle f (\cdot), g (\cdot)\rangle := \sum_{x\in X} f(x) g(x);$\\
  If $x = (x_1, \ldots, x_n), y = (y_1, \ldots, y_n) \in X$, then 
  $\langle x_1, x_2 \rangle := \sum_{i=1}^n x_i y_i$;
	\item If $I, J, \Omega$ are finite sets, $\zeta \in \Delta(\Omega)$, and 
	$\mu(\cdot|i, j, \omega') \in \Delta(\Omega)$ for any $i \in I, j \in J,\omega' \in \Omega$, then for each $i \in I, j \in J, \omega \in \Omega$ we define \vspace{-0.1cm}
	$$\left(\zeta * \mu(i,j)\right)(\omega) := 
	\sum\nolimits_{\omega'\in \Omega} \zeta(\omega') \cdot \mu(\omega|i,j,\omega');$$ \vspace{-0.5cm}
	\item If $I, J$ are finite sets and $g : I \times J \to \R$ is a function, then
	\begin{multline*}
  \texttt{Val}_{I \times J} [g(i,j)] := 
	\sup_{x \in \Delta(I)} \inf_{y \in \Delta(J)} 
	\left(\int_{I \times J} g (i, j) \; dx (i) \otimes dy (j)\right) =\\
	\inf_{y \in \Delta(J)} \sup_{x \in \Delta(I)} 
	\left(\int_{I \times J} g (i, j) \; dx (i) \otimes dy (j)\right),
  \end{multline*}
	i.e., $\texttt{Val}_{I \times J} [g(i,j)]$ is the value of the one-shot 
	zero-sum game with action spaces $I, J$ and with payoff function $g$.
\end{itemize}

\section{Introduction}

\textbf{Zero-sum stochastic games} with full state observation were first introduced in \cite{Sha53} to model dynamic interactions between two players with opposing interests. This game proceeds in discrete time as follows. 
At each stage, Player~1 and Player~2 observe the current state, recall the actions taken in previous stages, and simultaneously choose mixed actions. These choices determine the stage payoff, which Player~1 seeks to maximize and Player~2 seeks to minimize. The next state is then drawn randomly, conditional on the current state and actions.
Given a fixed discount factor $\lambda \in (0,1]$ and an initial state $\omega$, Player 1 aims to maximize the $\lambda$-discounted total payoff
$E\left[\lambda \sum\nolimits_{i = 1}^\infty (1-\lambda)^{i-1} g_i\right],$
where $g_i$ denotes the payoff at stage $i$, and Player 2 seeks to minimize it. Under standard measurability assumptions, the maxmin and minmax coincide, and the resulting quantity is called the $\lambda$-discounted value, denoted by $v_\lambda(\omega)$.

Stochastic games with public signals \cite[\S IV.1]{MerSorZam15} concern the case where players cannot fully observe the current state. In such games, the players are informed of an initial probability distribution $p$ over the states, and at each stage they observe the action profile together with a public signal that depends on the current state. The value $v_\lambda(p)$ is defined in the same way as above. A special case of such games is that of state-blind stochastic games, in which players observe only a single public signal.

One line of research concerns the behavior of the value as the players become increasingly patient, i.e., as $\lambda$ tends to $0$. For different families of stochastic games, $v_\lambda(\cdot)$ converges uniformly as $\lambda$ tends to $0$; see, e.g., \cite{Eve58}, \cite{Koh74}, \cite{BewKoh76}, \cite{Ros00}, \cite{RosSor01}, \cite{BolGauVig15}, \cite{Zil24}. However, $v_\lambda$ may diverge as $\lambda$ tends to $0$, as in finite state-blind stochastic games \cite{Zil16}, or in games with full state observation, compact action spaces, finite state space, and continuous payoff and transition functions \cite{Vig13}.
See also \cite{SorVig15} for a general method to construct counterexamples in various frameworks.

\textbf{Zero-sum stochastic games with full state observation and with stage duration} were introduced in \cite{Ney13}. Given a stochastic game $\Gamma$ (with full state observation),  \cite{Ney13} considers a family $\Gamma_h$ of stochastic games in which the players act at times $0, h, 2h, $ and so on. In such games, both payoffs and leaving probabilities\footnote{A \emph{leaving probability} refers to any transition probability between two \emph{distinct} states.} are normalized at each stage, i.e., they are proportional to $h$. Let $v_{h,\lambda}$ denote the value of the game with discount factor $\lambda$ and stage duration $h$.
We are interested in the following two limits. 
\begin{enumerate}
\item For each fixed $h \in (0,1]$, the \emph{asymptotic value} $\lim_{\lambda \to 0} v_{h, \lambda}$.
\item For each fixed $\lambda \in (0,1]$, the \emph{evanescent value} $\lim_{h \to 0} v_{h, \lambda}$.
\end{enumerate}
The evanescent value $\lim_{h \to 0} v_{h, \lambda}$ of a stochastic game with full state observation coincides with the value of the same game played in continuous time, with $\lambda$-discounted payoff $\int_0^{+\infty} \lambda e^{-\lambda t} g_t \, dt$; see \cite[\S 3]{Ney13}, \cite[\S 8]{SorVig16}, \cite[Remark~4]{Nov24a} for more details.
Hence, when it exists, the limit
$$\lim_{\lambda \to 0} \lim_{h \to 0} v_{h, \lambda}$$
can be interpreted as the asymptotic value of the game played in continuous time.

The papers \cite{Ney13} and \cite{SorVig16} discuss the existence of the evanescent value $\lim_{h \to 0} v_{h, \lambda}$ for a fixed discount factor $\lambda$. \cite{Ney13} proved its existence for finite games, while \cite{SorVig16} generalized this result to games in which the state and action spaces are general compact sets, and stage durations $h_n$ may depend on the stage number $n$. A corollary (see Proposition~\ref{cor1} below) of these results states that the asymptotic value $\lim_{\lambda \to 0} v_{1, \lambda}$ of a discrete-time stochastic game coincides with the asymptotic value $\lim_{\lambda \to 0} \lim_{h \to 0} v_{h, \lambda}$ of the same game played in continuous time. Games with stage duration have also been studied in \cite{CarRaiRosVie16}, \cite{Fab16}, \cite{Sor18}.

The main goal of this article is to study the \textbf{asymptotic value in a more general framework of zero-sum stochastic games with stage duration and public signals}. We begin by extending the definition of games with stage duration to games with deterministic public signals: the states are partitioned into a finite number of sets, and players are informed only of the set containing the current state. Notably, this article is the first to introduce and analyze games with stage duration in the context of public signals, whereas prior work has focused solely on fully observed or state-blind settings.

In \S\ref{twoexamples}, we present two simple examples showing that, if the state is not fully observed, then the limits $\lim_{\lambda \to 0} v_{h, \lambda}$ and $\lim_{\lambda \to 0} \lim_{h \to 0} v_{h, \lambda}$ are unrelated.

Our main result, \textbf{Theorem~\ref{thh2}} from \S\ref{jjjnt442}, gives an example of a \emph{finite} zero-sum stochastic game with deterministic public signals, in which the asymptotic value in continuous time ($\lim_{\lambda \to 0} \lim_{h \to 0} v_{h, \lambda}$) exists, while the asymptotic value in discrete time ($\lim_{\lambda \to 0} v_{1, \lambda}$) \emph{does not}. 
This demonstrates that moving from discrete to continuous time can have a regularizing effect, producing an asymptotic value even when the original discrete-time game lacks one. Importantly, these two limits always coincide for stochastic games with full state observation, highlighting the critical role of imperfect information in generating this phenomenon.

Let us briefly outline the main ideas used in the proof of Theorem~\ref{thh2}. The game we consider is similar to those in \cite{Zil16} and \cite{RenZil20}, for which it is known that the asymptotic value $\lim_{\lambda \to 0} v_{1, \lambda}$ does not exist. The existence of $\lim_{\lambda \to 0} \lim_{h \to 0} v_{h, \lambda}$ is proved by using \cite[Theorem~2]{Nov24a}: $\lim_{h \to 0} v_{h, \lambda}$ is a viscosity solution of a certain differential equation. We then use this result to compute explicitly $\lim_{\lambda \to 0} \lim_{h \to 0} v_{h, \lambda}$.

\section{Zero-sum stochastic games with deterministic public signals}

\label{repeatedGames}

A \emph{zero-sum stochastic game with deterministic public signals} is a  $7$-tuple $(A, \Omega, f, I, J, g, P)$, where: 
\begin{itemize}
	\item $A$ is the finite set of signals;
	\item $\Omega$ is the finite set of states;
	\item $f : \Omega \to A$ is the state-signaling function;
	\item $I$ is the finite set of actions of Player~1;
	\item $J$ is the finite set of actions of Player~2;
	\item $g : I \times J \times \Omega \to \R$ is the stage payoff function;
	\item $P : I \times J \times \Omega \to \Delta(\Omega)$ is the transition probability function.
\end{itemize}

For ease of notation, we write $P (\omega \mid i,j, \omega')$ instead of $P (i,j, \omega')(\omega)$.

The game $(A, \Omega, f, I, J, g, P)$ proceeds in stages as follows. Before the first stage, an initial state $\omega_1 \in \Omega$ is drawn according to a probability law $p_0$, and the players receive the signal $\alpha_1 = f(\omega_1)$. At each stage $n \in \N^*$:
\begin{enumerate}
	\item The current state is $\omega_n \in \Omega$. The players do not observe it, but they observe the signal $\alpha_n = f(\omega_n) \in A$ and recall each other’s pure actions at the previous stage.
	\item The players simultaneously choose their mixed actions: Player~1 chooses $x_n \in \Delta(I)$ and Player~2 chooses $y_n \in \Delta(J)$.
	\item A pure action $i_n \in I$ of Player~1 (respectively $j_n \in J$ of Player~2) is drawn according to $x_n$  (respectively $y_n$).
	\item Player~1 receives payoff $g_n = g(i_n, j_n, \omega_n)$, while Player~2 receives payoff $-g_n$. The next state $\omega_{n+1}$ is drawn according to the probability law $P(\cdot \mid i_n, j_n, \omega_n)$.
\end{enumerate}
The above description of the game is assumed to be common knowledge.

A state $\omega$ is called \emph{absorbing} if for any $i \in I, j \in J$ we have $P(\omega \mid i, j, \omega) = 1$.

Two special cases of the above construction are as follows:
\begin{enumerate}
\item If $A$ is a singleton, then $G$ is a 
\emph{state-blind stochastic game}; 
\item If
$\alpha_{n} = \omega_{n}$ for all $n \in \N^*$, then $G$ is a 
\emph{stochastic game with full state observation}, cf. \cite{Sha53}. \demo
\end{enumerate}

\emph{A history of length $t \in \N$} in the stochastic game $(A, \Omega, f, I, J, g, P)$ is
$(\alpha_1, i_1, j_1, \alpha_2, i_2, j_2, \ldots, \alpha_{t-1}, i_{t-1}, \allowbreak j_{t-1}, \alpha_t)$.
The set of all histories of length $t$ is 
$H_t := A \times (I \times J \times A)^{t-1}$.
A \emph{(behavior) strategy} of Player~1 (respectively Player~2) is a function 
$\sigma : \bigcup_{t \ge 1} H_t \to \Delta(I)$ 
(respectively $\tau : \bigcup_{t \ge 1} H_t \to \Delta(J)$). 
Players' strategies induce a probability distribution on the set $A \times (I \times J \times A)^{\N^*}$. (Indeed, strategies induce a probability distribution on the set $H_1$, then on the set $H_2$, etc. By the Kolmogorov extension theorem, this probability can be extended in a unique way to the set 
$A \times (I \times J \times A)^{\N^*}$). In particular, given an initial probability distribution $p \in \Delta(\Omega)$, strategies 
$\sigma : \bigcup_{t \ge 1} H_t \to \Delta(I), \tau : \bigcup_{t \ge 1} H_t \to \Delta(J)$, 
and the induced probability distribution $P^p_{\sigma, \tau}$ on $A \times (I \times J \times A)^{\N^*}$, we can consider the expectation $E^p_{\sigma, \tau}$ of a random variable on $\bigcup_{t \ge 1} H_t$.

We now specify how to compute the total payoff. Given a stochastic game $\Gamma = (A, \Omega, f, I, J, g, P)$ and a discount factor $\lambda \in (0,1]$, we consider the $\lambda$-discounted stochastic game $\Gamma_\lambda$, whose total payoff is $E^p_{\sigma, \tau} \left( \lambda \sum\nolimits_{n=1}^\infty (1-\lambda)^{n-1} g_n\right).$
The game $\Gamma_\lambda$ is said to have a value 
$v : \Delta(\Omega) \to \R$ if for all $p \in \Delta(\Omega)$ we have
$v(p) = \sup_{\sigma} \inf_{\tau} E^p_{\sigma, \tau} 
\left(\lambda \sum\nolimits_{n=1}^\infty (1-\lambda)^{n-1} g_n\right) = 
\inf_{\tau} \sup_{\sigma} E^p_{\sigma, \tau} 
\left(\lambda \sum\nolimits_{n=1}^\infty (1-\lambda)^{n-1} g_n\right).$
In our finite framework, the value always exists; see \cite[IV.3]{MerSorZam15}.



From $P$ we can compute the \emph{kernel}
$q : I \times J \times \Omega \to \R^{|\Omega|}$
defined by the expression
$$q(\omega \mid i,j, \omega') = 
\begin{cases}
P(\omega \mid i,j, \omega'), &\text{if } \omega \neq \omega';\\
P(\omega \mid i,j, \omega) - 1, &\text{if } \omega = \omega'.
\end{cases}$$
It will be convenient to define certain games directly in terms of their kernel. Note that for any fixed $\omega' \in \Omega$ we have
$\sum_{\omega \in \Omega} q(\omega \mid i,j, \omega') = 0$.

\begin{remark}[Difference with the standard definition of stochastic games with 
public signals]
\label{diff32}
Unlike the general case of games with random signals $P' : I \times J \times \Omega \to \Delta(\Omega \times A)$, the present framework considers the special case in which signals are deterministically given by a partition of the state set. We restrict attention to this setting because: (i) it is enough for our example; (ii) it is easier to define games with stage duration in this particular framework. \demo
\end{remark}

\section{Zero-sum stochastic games with stage duration and deterministic public signals}
\label{anti-anti}


\subsection{Definition of the model}
\label{def1}

In our particular setting of games with deterministic public signals, it is straightforward to define games with stage duration, since the signal depends only on the current state and is therefore independent of the stage duration.

\begin{definition}
\label{rty001}
Let $(A, \Omega, f, I, J, g, q)$ be a zero-sum stochastic game with deterministic public signals. The \emph{\mbox{$\lambda$-discounted} stochastic game with public signals and stage duration $h \in (0,1]$} is the $\lambda h$-discounted stochastic game $(A, \Omega, f, I, J, g, h q)$ with deterministic public signals. Denote by $v_{h,\lambda}$ the value of this game.
\end{definition}

Definition~\ref{rty001} provides an approximation of a continuous-time game. To see this, fix a small $h$ and consider a continuous-time game with payoff $g$ and generator $q$, in which the players are restricted to choosing actions only at the discrete times $0, h, 2h, \ldots$. Then, between times $n h$ and $(n+1) h$, the players receive the payoff $$\int_{n h}^{(n+1) h} \lambda e^{-\lambda t} g(t) \, dt \approx \lambda h (1 - \lambda h)^{n} g(n h),$$ and the state evolves according to the probability law $\exp(h q) \approx 1 + h q$. See \cite[\S 3]{Ney13}, \cite[\S 8]{SorVig16}, \cite[Remark~4]{Nov24a} for more details.

Thus the total payoff is $\sum\nolimits_{i = 1}^\infty \lambda h (1-\lambda h)^{i-1} g_i.$ 

We denote by $v_{0,\lambda}$ the limit $\lim_{h \to 0} v_{h,\lambda}$, whenever it exists. This limit is known to exist in the case of full state observation and in the state-blind case; see Remarks~\ref{xcv4_0}--\ref{xcv4_1}.

\begin{remark}[The case of full observation]
\label{xcv4_0}
Games with stage duration and full state observation were first introduced in \cite{Ney13} and subsequently studied in \cite{SorVig16}. According to \cite[Theorem~1]{Ney13}, the limit $v_{0,\lambda}$ exists and satisfies
$$v_{0,\lambda} = v_{1,\frac{\lambda}{1+\lambda}}.$$ A similar model of games with stage duration was considered in \cite{Sor18}.\demo
\end{remark}

\begin{remark}[State-blind case]
\label{xcv4_1}
State-blind games with stage duration were recently studied in \cite{Nov24a}. 
The result \cite[Theorem~2]{Nov24a} (see also \cite[Proposition~5.3]{Sor18}) shows that for any state-blind game
$(A, \Omega, f, I, J, g, q)$, the limit $v_{0,\lambda}(p)$ exists and is the unique viscosity solution of the partial differential equation
$$\lambda v(p) = \texttt{Val}_{I\times J} [\lambda g(i,j,p) + \langle p * q(i,j), \nabla v(p) \rangle],$$ where for each $i \in I, j \in J, \omega \in \Omega$ we have
$\left(p * q(i,j)\right)(\omega) := \sum\nolimits_{\omega'\in \Omega} p(\omega') \cdot q(\omega \mid i,j,\omega')$.
\demo
\end{remark}


Now, we recall some known results.

\begin{proposition}
\label{cor1}
(Follows from \cite[Theorem~1]{Ney13} and \cite{BewKoh76}). Let $(A, \Omega, f, I, \allowbreak J, g, q)$ be a finite stochastic game with full state observation. For a fixed discount factor $\lambda$, we have:
\begin{enumerate}
	\item For any $h \in (0,1]$, the limits $\lim\nolimits_{\lambda \to 0} v_{h, \lambda}$ and $\lim\nolimits_{\lambda \to 0} v_{1,\lambda}$ both exist and are equal: $$\lim_{\lambda \to 0} v_{h, \lambda} = \lim_{\lambda \to 0} v_{1,\lambda};$$
	\item 
  We have
$$\lim\limits_{h \to 0}
\left(\lim\limits_{\lambda \to 0} v_{h, \lambda}\right) = 
\lim\limits_{\lambda \to 0} v_{0, \lambda} = 
\lim\limits_{\lambda \to 0} v_{1,\lambda}.$$
\end{enumerate}
\end{proposition}

\begin{remark}[Analogue of Proposition~\ref{cor1} in the case of infinite $I, J,$ or $\Omega$]
\label{sbnke}
(Follows from \cite[Corollary~7.1]{SorVig16}).
Earlier in this paper, we assumed that the action sets $I, J$ and the set of states $\Omega$ are finite. 
In the infinite case, the existence of the asymptotic value (as $\lambda \to 0$) is not guaranteed; see \cite{Zil16}, \cite{RenZil20}, \cite{Vig13}. In this case, we have:
\begin{enumerate}
	\item Under some mild technical assumptions, $v_{0,\lambda} (\omega)$ exists and satisfies 
	$$v_{0,\lambda} (\omega) = v_{1,\frac{\lambda}{1+\lambda}} (\omega).$$
	\item If $\lim_{\lambda \to 0} v_{1,\lambda}$ exists, then so does $\lim_{\lambda \to 0} v_{0,\lambda}$, and we have
$$\lim\limits_{h \to 0}
\left(\lim\limits_{\lambda \to 0} v_{h, \lambda}\right) = 
\lim\limits_{\lambda \to 0} v_{0, \lambda} = 
\lim\limits_{\lambda \to 0} v_{1,\lambda}.$$
\item If $\lim\nolimits_{\lambda \to 0} v_{1,\lambda}$ does not exist, then neither does $\lim_{\lambda \to 0} v_{0,\lambda}$. \demo
\end{enumerate}
\end{remark}


\subsection{Examples where the asymptotic values in discrete and continuous time differ}
\label{twoexamples}

In this section, we briefly present two examples of games in which the asymptotic value (as the discount factor $\lambda$ approaches $0$) of a game played in discrete time does not coincide with the asymptotic value of the same game played in continuous time, even though both asymptotic values exist. As mentioned in Remark~\ref{sbnke}(2), this phenomenon cannot occur when the state is fully observed.

\begin{example}
\label{exxxx1}
The example is from Guillaume Vigeral (private communication). Consider the state-blind Markov decision process shown in Figure~\ref{fig1}.

\begin{figure}[h]
 \centering
	\begin{tikzpicture}[font = {\large}, node distance = 1.5cm,
node/.style = {circle, draw = black!100, very thick, minimum size = 13mm}]
	\node[node, label={[align=right]left:State $S_1$\\ 
	Initial probability $p_1$}] (S1) {$0$};
	\node[node, label={[align=left]right:State $S_2$\\ 
	Initial probability $p_2$}] (S2) [right=of S1]{$0$};
	\node[node, label={[align=right]left:State $S_3$}] (S3) [below=of S1]{$-1^*$};
	\node[node, label={[align=left]right:State $S_4$}] (S4) [below=of S2]{$+1^*$};

	\path[ultra thick, ->]  (S1) edge[bend left] 	node [above]	{$C$} (S2)
													(S2) edge[bend left] 	node [below]	{$C$} (S1)
													(S1) edge						 	node [right]	{$Q$} (S3)
													(S2) edge						 	node [left]	{$Q$} (S4);							
	\end{tikzpicture}
	\caption{The Markov decision process from Example~\ref{exxxx1}.}
	\label{fig1} 
\end{figure}

There are 4 states $S_i \; (i =1,2,3,4)$, with $S_3$ and $S_4$ being absorbing. The action space is $\{C, Q\}$. In states $S_1, S_2, S_3, S_4$, the payoffs are $0, 0, -1, +1$, respectively, independent of the action chosen by the single decision-maker (the maximizing player). Figure~\ref{fig1} shows the transitions between states. For example, if the current state is $S_1$ and Player~1 plays $C$, then the next state is $S_2$. Denote the initial probability distribution over the states by $p = (p_1,p_2)$, where $p_i$ is the probability that the initial state is $S_i$. 

A computation shows that
$$v_{1,\lambda}(p) = \max\{(p_2 - p_1)(1-\lambda), (p_1 - p_2)(1-\lambda)^2\} \xrightarrow{\lambda \to 0} |p_1 - p_2|.$$

A computation in Appendix~\ref{appenA} shows that
\[v_{0,\lambda}(p) = \frac{1}{1+\lambda} \max\{0, p_2-p_1\}
\xrightarrow{\lambda \to 0} \max\{0, p_2-p_1\}.
\demoo\]
\end{example}

\begin{remark}[Explanation of the expression for $v_{0,\lambda}$]
Appendix~\ref{appenA} shows that when $h < 1/2$, taking action $C$ moves the player's belief about being in state $S_1$ closer to $1/2$, but never beyond it. Therefore:
\begin{enumerate} 
	\item If $p_1 > 1/2$, it is never optimal for the player to choose $Q$, and thus he will always choose $C$. This gives the optimal payoff of $0$.
	\item If  $p_1 < 1/2$, the only optimal action is to always choose $Q$. This gives the optimal payoff of $\frac{\lambda}{1+\lambda} (p_2 - p_1)$.
	\item If $p_1 = 1/2$, any sequence of actions is optimal, and the optimal payoff is $0$.
\end{enumerate} 
See Figure~\ref{belief_update}. \demo
\end{remark}

\begin{figure}[h!]
\centering
\begin{tikzpicture}[x=10cm, y=1cm, >=Stealth, thick]

  \draw[line width=1pt] (0,0) -- (1,0);

  \foreach \x/\lab in {0/0, 0.5/{\tfrac{1}{2}}, 1/1} {
    \draw[line width=1.2pt] (\x,0.12) -- (\x,-0.12);
    \filldraw[black] (\x,0) circle (2.5pt);
    \node[below=8pt, font=\large] at (\x,0) {$p=\lab$};}

  \draw[->, line width=1pt] (0.15,0.4) -- (0.45,0.4);
  \draw[->, line width=1pt] (0.85,0.4) -- (0.55,0.4);

  \node[align=center, font=\large, text width=10cm] at (0.5,1.2)
    {\textbf{$p$ denotes the belief that the current state is $S_1$.}};

\end{tikzpicture}
\caption{Illustration of belief updating when $h < 1/2$: after choosing $C$, the belief $p$ drifts toward $1/2$ but never crosses it.}
\label{belief_update}
\end{figure}

\begin{example}
Strictly speaking, this example from \cite{Ren06} falls outside the scope of our model because it involves incomplete information: Player~1 observes the current state, whereas Player~2 does not. Nonetheless, we include it here for its illustrative value. The game has two states, $S_1$ and $S_2$. Each player has two actions, and the payoff matrices are
\begin{equation*}
\centering
	\begin{tabular}{|c|c|}
    \hline
    $1$ & $0$\\
    \hline
    $0$ & $0$\\
    \hline
	\end{tabular} 
  \text{  in state } S_1, \text{and } \;\;\;\;\quad
	\begin{tabular}{|c|c|}
    \hline
    $0$ & $0$\\
    \hline
    $0$ & $1$\\
    \hline
	\end{tabular} 
  \text{ in state } S_2.
\end{equation*}

For any pair of actions, the state remains unchanged with probability $1/2$. Denote by $v_\lambda(p)$ the $\lambda$-discounted value of this game, in which the initial state is $S_1$ with probability $p$ and $S_2$ with probability $1-p$. A computation shows that
$$v_\lambda(p) = \lambda \min\{p, 1-p\} + (1-\lambda) \frac{1}{2} \xrightarrow{\lambda \to 0} \frac{1}{2}.$$

\cite{CarRaiRosVie16} considers the same game in continuous time. More precisely, this article considers the continuous-time game with the infinitesimal generator
\begin{equation*}
  \centering
	\begin{tabular}{|c|c|}
    \hline
    $-\pi$ & $\pi$\\
    \hline
    $\pi$ & $-\pi$\\
    \hline
	\end{tabular},
\end{equation*}
where $\pi > 0$. 
The state variable evolves as it would in the continuous-time model, but players are allowed to act only at times $0, h, 2h,$ and so on.\footnote{See \cite[\S 2.1]{CarRaiRosVie16} for a more detailed description of the model.} Thus, for each fixed $h$, \cite{CarRaiRosVie16} studies a discrete-time game with incomplete information. As $h$ approaches $0$, the values of these games converge to the value $v_\lambda^{\texttt{cont}}(p)$ of the continuous-time game. A result in \cite[\S 3.3]{CarRaiRosVie16} states that
\[v_\lambda^{\texttt{cont}}(p) = \frac{1}{4} - \frac{(2p-1)^2}{4} \cdot \frac{\lambda}{\lambda + 4 \pi} \xrightarrow{\lambda \to 0} \frac{1}{4}.\demoo\]
\end{example}

The above two examples illustrate that, in general, the asymptotic values in discrete- and continuous-time settings might differ. In the rest of the paper, we construct an example in which another phenomenon appears: transitioning from discrete time ($h=1$) to continuous time ($h \to 0$) can restore the existence of an asymptotic value that is otherwise absent in discrete time.

\subsection[Main result: asymptotic value exists in continuous time but not in discrete time]
{Main result: asymptotic value exists in continuous time but not in discrete time}
\label{jjjnt442}

\begin{example}
\label{mwa256}
Consider the following two-player zero-sum stochastic game $G_1 = (A, \Omega, f, I, J, g, q)$. 
\begin{itemize}
	\item The state space is $\Omega = \{ \omega_1, \omega_2, \omega_3^*, \omega_4, \omega_5, \omega_6^*\}$. The states $\omega_3^*$ and $\omega_6^*$ are absorbing. 
	\item The stage payoff $g$ depends only on the state, taking the value $-1$ in states $\omega_1, \omega_2, \omega_3^*$ and $+1$ in states $\omega_4, \omega_5, \omega_6^*$.
	\item The signal set is $A = \{ \texttt{PLUS},\, \texttt{MINUS} \}$, and we have $$f(\omega_1) = f(\omega_2) = f(\omega_3^*) = \texttt{MINUS} \quad \text{ and } \quad f(\omega_4) = f(\omega_5) = f(\omega_6^*) = \texttt{PLUS}.$$
	\item In states $\omega_1, \omega_2, \omega_3^*$, Player~1’s action set is $\{T,B,Q\}$ and Player~2’s action set is $\{L,R\}$. In states $\omega_4, \omega_5, \omega_6^*$, Player~1’s action set is $\{T,M,B\}$ and Player~2’s action set is $\{L,M,R,Q\}$. The transition matrices for the non-absorbing states are shown in Tables~\ref{eenrn43}--\ref{eenrn44}.
\end{itemize}

\begin{table*}[h]
	\begin{minipage}{0.34\linewidth}
	\centering
	\begin{tabular}{|c|c|c|}
\hline
& $L$ & $R$\\
\hline
$T$ & $\omega_1$ & $\omega_2$\\
\hline
$B$ & $\omega_2$ & $\omega_1$ \\
\hline
$Q$ & $\omega_5$ & $\omega_5$\\
\hline
	\end{tabular}
	\caption{State $\omega_1$}
  \label{eenrn43}
	\end{minipage}%
	\begin{minipage}{0.75\linewidth}
	\centering
	\begin{tabular}{|c|c|c|}
\hline
& $L$ & $R$\\
\hline
$T$ & $\frac{1}{2} \omega_1 + \frac{1}{2} \omega_2$ & $\omega_2$\\
\hline
$B$ & $\omega_2$ & $\frac{1}{2} \omega_1 + \frac{1}{2} \omega_2$\\
\hline
$Q$ & $\omega_3^*$ & $\omega_3^*$\\
\hline
	\end{tabular} 
	\caption{State $\omega_2$}
  \label{eenrn41}
	\end{minipage}
\end{table*}
\begin{table*}[h]
	\begin{minipage}{0.34\linewidth}
	\centering
	\begin{tabular}{|c|c|c|c|c|}
\hline
& $L$ & $M$ & $R$ & $Q$\\
\hline
$T$ & $\omega_4$ & $\omega_5$ & $\omega_5$ & $\omega_2$\\
\hline
$M$ & $\omega_5$ & $\omega_4$ & $\omega_5$ & $\omega_2$\\
\hline
$B$ & $\omega_5$ & $\omega_5$ & $\omega_4$ & $\omega_2$\\
\hline
	\end{tabular}
	\caption{State $\omega_4$}
  \label{eenrn42}
	\end{minipage}%
	\begin{minipage}{0.75\linewidth}
	\centering
	\begin{tabular}{|c|c|c|c|c|}
\hline
& $L$ & $M$ & $R$ & $Q$\\
\hline
$T$ & $\frac{2}{3} \omega_4 + \frac{1}{3} \omega_5$ & $\omega_5$ & $\omega_5$ & 
$\omega_6^*$\\
\hline
$M$ & $\omega_5$ & $\frac{2}{3} \omega_4 + \frac{1}{3} \omega_5$ & $\omega_5$ & 
$\omega_6^*$\\
\hline
$B$ & $\omega_5$ & $\omega_5$ & $\frac{2}{3} \omega_4 + \frac{1}{3} \omega_5$ & 
$\omega_6^*$\\
\hline
	\end{tabular}
	\caption{State $\omega_5$}
  \label{eenrn44}
	\end{minipage}
\end{table*}

The initial probability distribution over the states is denoted by $p = (p_1,p_2,p_4,p_5),$ where $p_i$ is the probability that the initial state is $\omega_i$. See Figure~\ref{llmp44}.

\begin{figure}[H]
\centering
\hspace*{1.5cm} 
\begin{tikzpicture}
\large
\draw (3.5,0) circle [radius=0.35] node[label={[label distance=-0.1cm]left:$p_4$}] {$\omega_4$};
\draw (3.5,-1.2) circle [radius=0.35] node[label={[label distance=-0.1cm]left:$p_5$}] {$\omega_5$};
\draw (3.5,-2.4) circle [radius=0.35] node[] {$\omega_6^*$};
\draw (0,0) circle [radius=0.35] node[label={[label distance=-0.1cm]right:$p_1$}] {$\omega_1$};
\draw (0,-1.2) circle [radius=0.35] node[label={[label distance=-0.1cm]right:$p_2$}] {$\omega_2$};
\draw (0,-2.4) circle [radius=0.35] node[] {$\omega_3^*$};
\draw[color=red, line width=2pt, align=center] (3.5,-1.2) ellipse 
(1.5cm and 1.85cm) node[right=-60pt] {\hspace{0.2cm} Signal \texttt{PLUS}\\ 
\hspace{-0.1cm} Payoff $+1$ \\ 
\hspace{3cm} Player 1's actions: $T, M, B$\\
\hspace{3.4cm} Player 2's actions: $L, M, R, Q$};
\draw[color=blue, line width=2pt, align=center] (0,-1.2) ellipse 
(1.5cm and 1.85cm) node[left=40pt] {\hspace{0.75cm} Signal \texttt{MINUS}\\ 
\hspace{1.3cm} Payoff $-1$ \\
\hspace{-1.8cm} Player 1's actions: $T, B, Q$\\
\hspace{-1.3cm} Player 2's actions: $L, R$};
\end{tikzpicture}
\caption{Signaling structure on the states. The letter next to each state indicates its probability of being the initial state.}
\label{llmp44}
\end{figure}
 \demo
\end{example}
\begin{remark}
Note that the number of actions depends on the state. This is done to simplify the exposition. One could introduce redundant actions so that the set of actions is the same across all states. \demo
\end{remark}

We denote by $G^\lambda_1$ the game $G_1$ with $\lambda$-discounted payoff.

\begin{theorem}
\label{thh2}
For the game $G^\lambda_1$ from Example~\ref{mwa256}:
\begin{enumerate}
\item
$\lim\limits_{\lambda \to 0} v_{1,\lambda}$ does not exist.
\item For each $\lambda \in (0,1]$, the limit $v_{0,\lambda}$ exists.
\item $\lim\limits_{\lambda \to 0} v_{0,\lambda}$ exists.
\end{enumerate}
\end{theorem}


\section{The proof of Theorem~\ref{thh2}}
\label{thh20}

\subsection{Outline of the proof}

In \S\ref{prel3}, we present some preliminaries. In \S\ref{equiv2}, we recall that any stochastic game with public signals is equivalent to a stochastic game with full state observation, albeit with a larger state space. In \S\ref{SHA}, we recall the relationship between the Shapley equation and the game's value. 

\S\ref{thh21} and \S\ref{thh23} are devoted to the proof of Theorem~\ref{thh2}. 

In \S\ref{thh21}, we show that the pointwise limit $\lim_{\lambda \to 0} v_{1,\lambda}$ does not exist. This follows from the fact that $v_{1,\lambda}$ is also the value of a game studied in \cite[\S3.4]{RenZil20}.

In \S\ref{thh23}, we consider the game $G_{h}$ with stage duration $h$, whose $\lambda$-discounted value is $v_{h, \lambda}$. Our goal is to prove that the limits $v_{0,\lambda}$ and $\lim\nolimits_{\lambda \to 0} v_{0,\lambda}$ exist. The proof is fairly long, so we divide it into five parts:
\begin{itemize}
\item In \S\ref{rPART1}, for each $h \in (0, 1]$, we decompose $G_{h}$ into two state-blind ``half-games'', $G_{h}^-(k_-)$ and $G_{h}^+(k_+)$, whose $\lambda$-discounted values are denoted by $v_{h, \lambda}^- (k_-,p)$ and $v_{h, \lambda}^+ (k_+,p)$, respectively. Informally, $G_{h}^-(k_-)$ corresponds to the left part of $G_{h}$, with an absorbing state of payoff $k_-$ replacing the right half. Similarly, $G_{h}^+(k_+)$ corresponds to the right part of $G_{h}$, with an absorbing state of payoff $k_+$ replacing the left half. We prove Lemma~\ref{dwjbckdsjb} stating that $v_{h,\lambda}^-(k_-,p) = (k_- +1) v_{h,\lambda}^-(0,p) + k_-$ and $v_{h,\lambda}^+(k_+,p) = (1-k_+) v_{h,\lambda}^+(0,p) + k_+$. We denote $v_{h,\lambda}^+(p) := v_{h,\lambda}^+(0, p)$ and $v_{h,\lambda}^-(p) := v_{h,\lambda}^-(0, p)$.
\item In \S\ref{rPART2}, we aim to identify suitable candidates for $v_{0, \lambda}^- := \lim_{h \to 0} v^-_{h, \lambda}$ and $v_{0, \lambda}^+ := \lim_{h \to 0} v^+_{h, \lambda}$. To do this, we consider two partially observable Markov decision processes (POMDPs), $\widetilde G_h^- $ and $\widetilde G_h^+ $, each of which depends on $h\in (0,1]$ and has $\lambda$-discounted values $\widetilde v^{h,\lambda}_-$ and $\widetilde v^{h,\lambda}_+$, respectively. Proposition~\ref{xc44e} shows that these POMDPs have the same dynamics as the games $G_{h}^-$ and $G_{h}^+$, respectively. We then apply heuristic reasoning to identify candidates for $\lim_{h \to 0} \widetilde v^{h,\lambda}_-$ and $\lim_{h \to 0} \widetilde v^{h,\lambda}_+$, which we denote by $w_\lambda^-$ and $w_\lambda^+$, respectively.
\item In \S\ref{rPART3}, our goal is to prove Lemma~\ref{MainLemma}, namely, to show that $v_{0, \lambda}^- = w_\lambda^-$ and $v_{0, \lambda}^+ = w_\lambda^+$. A result in \cite{Nov24a} states that in order to verify it, it suffices to check that $w_\lambda^-$ and $w_\lambda^+$ are classical solutions of a partial differential equation from \cite{Nov24a}.
\item In \S\ref{rPART4}, we complete the proof. We explicitly compute $v_{0, \lambda}$ by combining the values $v_{0, \lambda}^-$ and $v_{0, \lambda}^+$, and then verify that $v_{0, \lambda}$ converges as $\lambda$ tends to $0$.
\item In \S\ref{rPART5}, we provide comments on the proof.
\end{itemize}

See Figure~\ref{jkqs5hygf54e}.

\begin{figure}[h]
  \centering 
  \begin{tikzpicture}[font = {\normalsize}, 
    node/.style = {rectangle, draw = black!100, very thick, minimum size =10mm},
    every text node part/.style={align=center}]
      \node[node] (S1) {Stochastic game $G_1$ with public signals,\\
                        $\lambda$-discounted game $G_1^\lambda$
                        and its value $v_{1,\lambda}$\\
                        (From Theorem~\ref{thh2})};
      \node[node] (S3) [below=1.2cm of S1]{Stochastic game $G_{h}$ with public signals,\\
                                           $\lambda$-discounted game $G_{h,\lambda}$
                                           and its value $v_{h,\lambda} $.};
      \node[node] (S4) [below=1.2cm of S3]{State-blind ``half-games'' 
                                           $G^-_{h} (k_-)$ and $G^+_{h} (k_+)$,\\
                                           $\lambda$-discounted games $G^-_{h,\lambda} (k_-)$
                                           and $G^+_{h,\lambda} (k_+)$, \\
                                           their values 
                                           $v^-_{h,\lambda} (k_-, p)$ and
                                           $v^+_{h,\lambda} (k_+, p)$.\\
                                           We also denote 
                                           $G^-_{h} := G^-_{h} (0), G^+_{h} := G^+_{h} (0),$ \\ $G^-_{h,\lambda} := G^-_{h,\lambda} (0), G^+_{h,\lambda} := G^+_{h,\lambda} (0),$ \\ $v^-_{h,\lambda} (p) := v^-_{h,\lambda} (0, p), v^+_{h,\lambda} (p) := v^+_{h,\lambda} (0, p),$ \\ $v^-_{0, \lambda} (p) := \lim_{h \to 0} v^-_{h, \lambda} (p), v^+_{0, \lambda} (p) := \lim_{h \to 0} v^+_{h, \lambda} (p)$. \\
                                           Introduced in \S\ref{rPART1} in order to prove that\\
                                           $\lim\limits_{\lambda \to 0}
                                           v_{0,\lambda}$ exists.};
      \node[node] (S5) [right=1cm of S4]{POMDPs
                                         $\widetilde G^-_{h}$ and $\widetilde G^+_{h}$,\\
                                         $\lambda$-discounted  POMDPs
                                         $\widetilde G^-_{h,\lambda}$
                                         and $\widetilde G^+_{h,\lambda}$, \\
                                         their values 
                                         $\widetilde v_-^{h,\lambda}$ and
                                         $\widetilde v_+^{h,\lambda}$.\\
                                         Used in \S\ref{rPART2}, because we can compute \\ heuristically
                                         $\widetilde v_-^{h,\lambda}$ and
                                         $\widetilde v_+^{h,\lambda}$,\\ and also because we have \\
                                         $v_{0,\lambda}^- = \lim\limits_{h \to 0} \widetilde v^{h,\lambda}_-$ and
                                         $v_{0,\lambda}^+ = \lim\limits_{h \to 0} \widetilde v^{h,\lambda}_+$.};

      \path[ultra thick, ->]  (S1) edge 		node [left]	  {stage duration} (S3)
                              (S3) edge 		node [left]	  {decomposition into ``half-games''} (S4)
                              (S4) edge     node [above]	{} (S5);
  \end{tikzpicture}
      \caption{This table outlines the plan of the proof, illustrating the connections between the different games in the proof and explaining why these games were introduced. It also serves as a reminder of the notation for the reader, in case of any confusion.}
  \label{jkqs5hygf54e} 
  \end{figure}

\subsection{Preliminaries}
\label{prel3}

\subsubsection{Any stochastic game with public signals is value-equivalent to a stochastic game with full state observation}
\label{equiv2}

We now recall a standard construction; see, e.g., \cite[\S 3]{MerSorZam15}. Given a zero-sum stochastic game $(A, \Omega, f, I, J, g, P)$ with deterministic public signals, we can define an analogous stochastic game with full state observation, whose state space is $\Delta(\Omega)$.

The payoff function  
$\widehat g : I \times J \times \Delta(\Omega) \to \R$ is defined by
$\widehat g (i,j,p) = \sum_{\omega \in \Omega} p(\omega) 
g(i, j, \omega)$.

Now let us define the transition probability function $\widehat P : I \times J \times \Delta(\Omega) \to \Delta_f(\Delta(\Omega))$. 
For each $\omega \in \Omega, p \in \Delta(\Omega), 
i \in I, j \in J$ denote 
$$P(\omega \mid i,j, p) = \sum_{\omega' \in \Omega} 
p(\omega') \cdot P(\omega \mid i,j, \omega').$$
If the players have belief $p \in \Delta(\Omega)$ 
about the current state, then after playing $(i, j) \in I \times J$,
the probability of receiving the signal $\alpha$ is
$$P( \alpha \mid i,j,p) = \sum_{\underset{f(\omega) = \alpha}{\omega\in\Omega}} P(\omega \mid i,j, p).$$

If the players have belief $p \in \Delta(\Omega)$ 
about the current state, then after playing $(i, j) \in I \times J$ and receiving the signal $\alpha \in A$, their posterior belief that the current state is $\omega$ is equal to 
$$P (\omega \mid i, j,\alpha, p) :=
\frac{P(\omega \mid i,j, p)}{P (\alpha \mid i,j,p)}$$
if $P (\alpha \mid i,j,p) \neq 0.$ This defines the posterior belief $P(i, j,\alpha, p)$ over the states.

Finally, the function
$\widehat P : I \times J \times \Delta(\Omega) \to \Delta_f(\Delta(\Omega))$
is defined by 
$$\widehat P (i,j, p) = \underset{P(\alpha \mid i,j,p) \neq 0}{\sum_{\alpha \in A}} P(\alpha \mid i,j,p) \cdot P(i,j,\alpha,p).$$

We can now consider the game $(\widehat A, \Delta(\Omega), \widehat f, I, J, \widehat g, \widehat P)$ with full state observation. Namely, $\widehat A = \Delta(\Omega)$ and $\widehat f(p) = p$ for any $p \in \Delta(\Omega)$. The following proposition follows directly from the above construction.

\begin{proposition} 
For any initial state distribution, the value of the game $(A, \Omega, f, I, J, g, P)$ with deterministic public signals coincides with the value of the game  $(\widehat A, \Delta(\Omega), \widehat f, I, J, \widehat g, \widehat P)$ with full state observation.
\end{proposition}


\subsubsection{Shapley operator}

\label{SHA}

\begin{proposition}
  \label{shapley_dop}
  Consider a stochastic game $(A, \Omega, f, I, J, g, P)$ with public signals, where $P$ is a probability distribution function. The value $v_\lambda(p)$ of this game exists and is the unique fixed point of the operator
  \begin{align}
    \label{aaa33r}
    &T: \{\text{Continuous functions on } \Delta(\Omega)\} \to 
    \{\text{Continuous functions on } \Delta(\Omega)\}, \notag \\
    &v_\lambda(p) \mapsto \texttt{Val}_{I \times J} 
    \bigg[\lambda \widehat g (i,j,p) + (1-\lambda) 
    \sum_{p' \in \Delta(\Omega)} \widehat P (p' \mid i,j,p) \cdot v_\lambda(p')\bigg].
  \end{align}
\end{proposition}

This proposition follows directly from \cite[Theorem~IV.3.2]{MerSorZam15}.

\begin{remark}
\label{onze}
  Note that the sum over $\Delta(\Omega)$ in \eqref{aaa33r} is finite, since the number of signals is finite. \demo
\end{remark}

\subsection{Non-existence of $\lim_{\lambda \to 0} v_{1,\lambda}$}
\label{thh21}

Note that, as long as the initial state is deterministic, $G_1$ is equivalent to a state-blind game, in which the signal on the state is not observed. Indeed, at each point in time, either the current state is absorbing ($\omega_3$ or $\omega_6$) or the signal can be determined by examining the parity of the number of times $Q$ has been played, making it irrelevant whether the signal is observed. This argument does not apply if the stage duration is less than $1$, because then there is a positive probability that the state remains the same when $Q$ is played.

We prove that $G^\lambda_1$ has the same dynamics as the state-blind game from \cite[\S3.4]{RenZil20}, which does not have an asymptotic value. This implies that $\lim_{\lambda \to 0} v_{1,\lambda}$ does not exist. More precisely, we consider the game from \cite[\S3.4]{RenZil20}, with $\alpha = 1/2, \beta = 1/3$, and we rename the states $(1,0), (1,1), (2,0), (2,1)$ as $\omega_1, \omega_2, \omega_4, \omega_5$, respectively.

To verify that $G^\lambda_1$ has the same dynamics as the game from \cite[\S3.4]{RenZil20}, we consider two cases.

\noindent \textbf{Case 1:} the initial probability is $p = (p_1,p_2,0,0)$. By Proposition~\ref{shapley_dop}, the value $v_{1,\lambda}(p)$ of $G_1^\lambda(p)$ and the value of the $\lambda$-discounted game from \cite[\S3.4]{RenZil20} both satisfy the equation
$$V_{\lambda} (p) = 
- \lambda + (1 - \lambda) \max\left\{
\frac{1}{2} V_\lambda(p') + 
\frac{1}{2} V_\lambda(p'') \; ; \;
p_1 V_\lambda(p''') + p_2 (-1)
\right\},$$
where 
$$p' = \left(p_1 + \frac{p_2}{2}, \frac{p_2}{2},0,0\right); \quad
p'' = \left(0,1,0,0\right); \quad
p''' = \left(0,0,0,1\right).$$
For the game $G_1$, the equation takes this form because Player 2's optimal strategy in $G_1$ is to play the mixed action $\frac{1}{2} L + \frac{1}{2} R$. For the game from \cite[\S3.4]{RenZil20}, the equation is straightforward to compute because only Player~1 can influence the game in states $\omega_1, \omega_2$.

\noindent \textbf{Case 2:} the initial probability is $p = (0,0,p_4,p_5)$. Similarly, the value $v_{1,\lambda}(p)$ and the value of the $\lambda$-discounted game from \cite[\S3.4]{RenZil20} both satisfy the equation
$$V_{\lambda} (p) = 
\lambda + (1 - \lambda) \min\left\{
\frac{1}{3} V_\lambda(p') + \frac{2}{3} V_\lambda(p'') \; ; \;
p_4 V_\lambda(p''') + p_5 (+1)
\right\},$$
where 
$$p' = \left(0,0,p_4+\frac{2 p_5}{3},\frac{p_5}{3}\right); \quad
p'' = \left(0,0,0,1\right); \quad
p''' = \left(0,1,0,0\right).$$

\subsection{Existence of $\lim\nolimits_{\lambda \to 0} v_{0,\lambda}$}
\label{thh23}

\subsubsection{Decomposition of the game into two ``half-games''}
\label{rPART1}

\begin{definition}
Fix $k_-, k_+ \in [-1,+1]$. The \emph{state-blind game $G_1^-(k_-)$} (respectively \emph{$G_1^+(k_+)$}) has four states: $\omega_1, \omega_2, \omega_3^*, k_-^*$ (respectively $\omega_4, \omega_5, \omega_6^*, k_+^*$). The stage payoff depends only on the state and is equal to $-1$ in states $\omega_1, \omega_2, \omega_3^*$, $+1$ in states $\omega_4, \omega_5, \omega_6^*$, $k_-$ in state $k_-^*$, and $k_+$ in state $k_+^*$. The states $\omega_3^*, \omega_6^*, k_-^*, k_+^*$ are absorbing. 

\noindent In $G_1^-(k_-)$, the actions of Player~1 are $T, B, Q$, and the actions of Player~2 are $L$, $R$. The transition matrices for non-absorbing states are given by Tables~\ref{eenrn43}--\ref{eenrn41}, but in Table~\ref{eenrn43} one needs to replace $\omega_5$ with $k_-^*$. 

\noindent In $G_1^+(k_+)$, the actions of Player~1 are $T, M, B$, and the actions of Player~2 are $L, M, R, Q$. The transition matrices for non-absorbing states are given by Tables~\ref{eenrn42}--\ref{eenrn44}, but in Table~\ref{eenrn42} one needs to replace $\omega_2$ with $k_+^*$.
\end{definition}

Informally, $G_{1}$ can be seen as a combination of two "half-games" $G_1^-(k_-)$ and $G_{1}^+(k_+)$; see Figure~\ref{xf34gl66h}.

\begin{figure}[h]
\centering 
\begin{tikzpicture}[framed]
\large
\draw (3.5,0) circle [radius=0.35] node {$\omega_4$};
\draw (3.5,-1.2) circle [radius=0.35] node {$\omega_5$};
\draw (3.5,-2.4) circle [radius=0.35] node {$\omega_6^*$};
\draw (0,0) circle [radius=0.35] node {$\omega_1$};
\draw (0,-1.2) circle [radius=0.35] node {$\omega_2$};
\draw (0,-2.4) circle [radius=0.35] node {$\omega_3^*$};
\draw[color=red, line width=2pt, align=center] (3.5,-1.2) 
ellipse (1.5cm and 1.85cm) node[right=50pt] 
{Signal \texttt{PLUS}\\ Payoff $+1$};
\draw[color=blue, line width=2pt, align=center] (0,-1.2) 
ellipse (1.5cm and 1.85cm) node[left=50pt] 
{Signal \texttt{MINUS}\\ Payoff $-1$};
\end{tikzpicture}\\
\begin{tikzpicture}
\node at (2.3,-6.3) {\normalsize Game $G_{1}$ with two public signals};
\coordinate (a1) at (-1.6,-6);
\coordinate (a2) at (-1.6,-8);
\coordinate (b1) at (6.6,-6);
\coordinate (b2) at (6.6,-8);
\draw[->,very thick] (a1) -- (a2);
\draw[->,very thick] (b1) -- (b2);
\end{tikzpicture}\\
	\subcaptionbox{State-blind ``half-game'' $G^-_{1}(k_-)$.}
	[.41\linewidth]{
\begin{tikzpicture}[framed]
\large
\draw (1.2,-1.2) circle [radius=0.35] node {$k_-^*$};
\draw (0,0) circle [radius=0.35] node[label={left:Prob. $1-p_-$}] {$\omega_1$};
\draw (0,-1.2) circle [radius=0.35] node[label={left:Prob. $p_-$}] {$\omega_2$};
\draw (0,-2.4) circle [radius=0.35] node{$\omega_3^*$};
\end{tikzpicture}
}
\hspace{0.07\linewidth}
  \subcaptionbox{State-blind ``half-game'' $G^+_{1}(k_+)$.}
[0.41\linewidth]{
\begin{tikzpicture}[framed]
\large
\draw (1.2,0) circle [radius=0.35] node[label={right:Prob. $1-p_+$}] {$\omega_4$};
\draw (1.2,-1.2) circle [radius=0.35] node[label={right:Prob. $p_+$}] {$\omega_5$};
\draw (1.2,-2.4) circle [radius=0.35] node {$\omega_6^*$};
\draw (0,-1.2) circle [radius=0.35] node {$k_+^*$};
\end{tikzpicture}
  }
\caption{From the two-signal game $G_{T^\infty}$, one obtains the state-blind ``half-games'' $G^-_1(k_-)$ and $G^+_1(k_+)$.}
\label{xf34gl66h}
\end{figure}

Now, we introduce some notation. As before, $h \in (0,1]$. 

\begin{enumerate}
\item $G_{h}$ (respectively $G_{h}^-(k_-)$, $G_{h}^+(k_+)$) is the game $G_1$ (respectively $G_{1}^-(k_-)$, $G_{1}^+(k_+)$) with stage duration~$h$.
\item $G_{h, \lambda}$ (respectively $G_{h, \lambda}^-(k_-)$, $G_{h, \lambda}^+(k_+)$) is the game $G_{h}$ (respectively $G_{h}^-(k_-)$, $G_{h}^+(k_+)$) with $\lambda$-discounted payoff.
\item $v_{h, \lambda}(p)$ is the value of $G_{h, \lambda}$ with the initial distribution $p = (p_1, p_2, p_4, p_5)$, where $p_i$ is the probability that the initial state is $\omega_i$.
\item $v_{h}^-(k,p)$ (respectively, $v_{h}^+(k,p)$) is the value of $G_{h,\lambda}^-(k)$ (respectively, $G_{h,\lambda}^+(k)$) with $\lambda$-discounted payoff, when the initial state is $\omega_2$ (respectively, $\omega_5$) with probability $p \in [0,1]$ and $\omega_1$ (respectively, $\omega_4$) with probability $1-p$.
\end{enumerate}

\vspace{0.23cm}

We denote $p_- = (0,1,0,0)$ and $p_+ = (0,0,0,1)$. It is straightforward to obtain that if $p = (p_1, p_2, 0, 0)$, then $v_{h, \lambda}(p) = v_{h, \lambda}^- \left(v_{h, \lambda} (p_+), p_2 \right)$. Similarly, if $p = (0, 0, p_3, p_4)$, then $v_{h, \lambda}(p) = v_{h, \lambda}^+ \left(v_{h, \lambda} (p_-), p_4 \right)$. Therefore, we focus on studying $v_{h}^-(k,p)$ and $v_{h}^+(k,p)$. First, we prove a simple lemma.

\begin{lemma}
\label{dwjbckdsjb}
  For any $p \in [0,1]$ and any $k_-, k_+ \in [-1,1]$,
  we have 
  $$v_{h,\lambda}^-(k_-,p) = (k_- +1) v_{h,\lambda}^-(0,p) + k_- \quad \text{ and } \quad v_{h,\lambda}^+(k_+,p) = (1-k_+) v_{h,\lambda}^+(0,p) + k_+.$$
\end{lemma}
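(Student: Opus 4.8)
The plan is to exploit the fact that, for each sign, all the half-games $G^-_{T_\infty,\lambda}(k_-)$ (respectively $G^+_{T_\infty,\lambda}(k_+)$) share the \emph{same} transitions, action sets, signals and starting distribution; the parameter $k$ enters only through the stage payoff of the absorbing state $k^*$. Consequently, for any fixed behaviour-strategy profile $(\sigma,\tau)$ the induced law on trajectories $(\omega_1,\omega_2,\dots)$ does not depend on $k$, so I can compare the payoffs of the $k$-game and the $0$-game \emph{path by path} and only afterwards take the value. First I would record that the discount weights $b_i=\lambda\bigl(\prod_{j=1}^{i-1}(1-\lambda h_j)\bigr)h_i$ sum to $1$: setting $P_i=\prod_{j=1}^{i-1}(1-\lambda h_j)$ one has $b_i=P_i-P_{i+1}$, so $\sum_{i=1}^{N}b_i=1-P_{N+1}\to 1$ because $\sum_j h_j=+\infty$. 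Hence the normalised payoff is literally $\Pi=\sum_{i=1}^{\infty}b_i\,g(\omega_i)$, where $g(\cdot)$ is the payoff \emph{rate} at the current state, and a constant rate $1$ always produces payoff $1$.

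For the negative half-game, the rate vector of $G^-(k_-)$ equals that of $G^-(0)$ plus $k_-\,\mathbf{1}_{k_-^*}$, and since every non-absorbing state together with $-^*$ carries rate $-1$ while $0^*$ carries rate $0$, one has the pointwise identity $\mathbf{1}_{k_-^*}=g^0+\mathbf{1}$ on states. Summing against $b_i$ along a trajectory then yields, pathwise,
$$\Pi^{k_-}=\Pi^0+k_-\sum_{i}b_i\,\mathbf{1}_{\{\omega_i=k_-^*\}}=\Pi^0+k_-\bigl(\Pi^0+1\bigr)=(1+k_-)\Pi^0+k_-.$$
Taking expectations under the common trajectory law of $(\sigma,\tau)$ gives $E_{\sigma,\tau}[\Pi^{k_-}]=(1+k_-)E_{\sigma,\tau}[\Pi^0]+k_-$ for every profile. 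As $k_-\in[-1,1]$ forces the slope $1+k_-\ge 0$, the affine map $x\mapsto(1+k_-)x+k_-$ is nondecreasing and therefore commutes with $\sup_\sigma\inf_\tau$, producing $v^-_{T_\infty,\lambda}(k_-,p)=(1+k_-)\,v^-_{T_\infty,\lambda}(0,p)+k_-$.

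The positive half-game is handled identically: there every non-absorbing state together with $+^*$ has rate $+1$ and $0^*$ has rate $0$, so the pointwise relation becomes $\mathbf{1}_{k_+^*}=\mathbf{1}-g^0$, giving $\sum_i b_i\,\mathbf{1}_{\{\omega_i=k_+^*\}}=1-\Pi^0$ pathwise and hence $\Pi^{k_+}=(1-k_+)\Pi^0+k_+$. Since $1-k_+\ge 0$, the same monotone-affine argument lets $\sup_\sigma\inf_\tau$ pass through and yields the second identity $v^+_{T_\infty,\lambda}(k_+,p)=(1-k_+)\,v^+_{T_\infty,\lambda}(0,p)+k_+$.

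The one point that needs genuine care, rather than routine computation, is that the comparison must be carried out at the level of an \emph{arbitrary} fixed strategy profile, not at the optimum, so that the two games are evaluated along the very same random trajectory; this is exactly what makes the affine relation hold before the value is taken. After that, the decisive observation is elementary but essential: the slopes $1+k_-$ and $1-k_+$ are nonnegative \emph{precisely because} $k_\pm\in[-1,1]$, which is what permits $\sup_\sigma\inf_\tau$ to commute with the affine transformation. The telescoping normalisation $\sum_i b_i=1$ is used essentially as well, since it is what converts the discounted occupation time of $k^*$ into $\Pi^0+1$ (respectively $1-\Pi^0$).
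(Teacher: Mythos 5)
Your proposal is correct and relies on the same mechanism as the paper's proof: since every state of the half-game other than $k^*$ carries one and the same payoff rate, the payoff of the $k$-game is a nondecreasing affine function of the payoff of the $0$-game (nondecreasing exactly because $k_-\ge -1$, resp.\ $k_+\le 1$), so the transformation passes through $\sup_\sigma\inf_\tau$ and the values correspond affinely. The paper phrases this more tersely --- it asserts that any optimal strategy of $G^-_{T_\infty,\lambda}(0)$ remains optimal in $G^-_{T_\infty,\lambda}(k_-)$ and writes the value as $\alpha k_- - (1-\alpha)$ --- while your pathwise identity $\sum_i b_i\,\mathbf{1}_{\{\omega_i=k_-^*\}}=\Pi^0+1$ together with the monotone-affine commutation makes that assertion explicit, so you have filled in the paper's implicit details rather than taken a genuinely different route.
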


\begin{proof}
  Note that as long as $k_- \ge -1$, the objective of the player is to reach the state $k_-$ as quickly as possible. So, for any $k_- \ge -1$, the game $G_{h,\lambda}^-(k_-)$ is an affine transformation of $G_{h,\lambda}^-(0)$, and thus any optimal strategy in $G_{h,\lambda}^-(0)$ remains optimal in $G_{h,\lambda}^-(k_-)$. Thus there exists $\alpha \in [0, 1]$ such that for any $k_- \in [-1,1]$ we have
  $v_{h,\lambda}^-(k_-,p) = \alpha k_- + (-1) (1-\alpha)$. 
  By taking $k_- = 0$, we obtain
  $$v_{h,\lambda}^-(0,p) = -1 + \alpha
  \iff \alpha = v_{h,\lambda}^-(0,p) + 1.$$

  Analogously, any optimal strategy in $G_{h,\lambda}^+(0)$ remains optimal in $G_{h,\lambda}^+(k_+)$, as long as $k_+ \le 1$. Thus there exists $\beta \in [0, 1]$ such that for any $k_+ \in [-1,1]$ we have $v_{h,\lambda}^+(k_+,p) = \beta k_+ + 1 (1-\beta)$. By taking $k_+ = 0$, we obtain
  $$v_{h,\lambda}^+(0,p) = 1 - \beta
  \iff \beta = 1 - v_{h,\lambda}^+(0,p).$$
  From this, the assertion of the lemma follows immediately.
\end{proof}

\begin{remark}
This proof is based on ideas developed in the article \cite{SorVig15}.
\end{remark}

Hence, if we want to find the limits
$$v_{0,\lambda}^-(k_-,p) := \lim\limits_{h \to 0} 
v_{h,\lambda}^-(k_-,p) \quad \text{ and } \quad 
v_{0,\lambda}^+(k_+,p) :=
\lim\limits_{h \to 0} v_{h,\lambda}^+(k_+,p),
$$
where $k_-, k_+ \in [-1,1]$, it is sufficient to consider games
$G_{h}^- := G_{h}^-(0)$ and
$G_{h}^+ := G_{h}^+(0)$,
$\lambda$-discounted games 
$G_{h,\lambda}^- := G_{h,\lambda}^-(0)$ and
$G_{h,\lambda}^+ := G_{h,\lambda}^+(0)$, 
and their values 
$v_{h,\lambda}^-(p) := v_{h,\lambda}^-(0, p)$ and
$v_{h,\lambda}^+(p) := v_{h,\lambda}^+(0, p)$, respectively.
Thus our goal is to find
$$v_{0, \lambda}^-(p) := \lim\limits_{h \to 0} 
v_{h, \lambda}^-(p) \quad \text{ and } \quad
v_{0, \lambda}^+(p) :=
\lim\limits_{h \to 0} v_{h, \lambda}^+(p).$$
In the next lemma, we will determine these two functions.

\begin{lemma}
  We have
  \begin{align*}
  &1) \;
v_{0,\lambda}^- (p) = w_\lambda^- (p) := 
\begin{cases}
 -\frac{p + \lambda}{1+\lambda},
& \text{if } p < \frac{4\lambda + 2}{4\lambda + 3}; \\
-1 + 
\frac{ (4 \lambda)^{4\lambda/3}}
{(1+\lambda) (3 + 4\lambda)^{1+(4\lambda/3)}} 
\left(3 p - 2\right)^{-4\lambda/3},&
\text{if } p \ge \frac{4\lambda + 2}{4\lambda + 3}.
\end{cases}\\
  &2) \;
v_{0,\lambda}^+ (p) = w_\lambda^+ (p) := 
\begin{cases}
\frac{p + \lambda}{1+\lambda},
& \text{if } p < \frac{9\lambda + 6}{9\lambda + 8}; \\
1 - \frac{2 (18 \lambda)^{9\lambda/8}}
{(1+\lambda) (8 + 9 \lambda)^{1+(9\lambda/8)}} 
\left(8 p - 6\right)^{-9\lambda/8},&
\text{if } p \ge \frac{9\lambda + 6}{9\lambda + 8}.
\end{cases}
  \end{align*}
\label{MainLemma}
\end{lemma}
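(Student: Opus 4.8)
The plan is to invoke Proposition~\ref{thh3}: for each of the two state-blind ``half-games'' the limit $\lim_{\sup_{i} h_i \to 0} v^{\mp}_{T_\infty,\lambda}(0,\cdot)$ exists and is the \emph{unique} viscosity solution of the associated first-order PDE on the belief simplex. It therefore suffices to exhibit $w^-_\lambda$ and $w^+_\lambda$ as classical (hence viscosity) solutions of those PDEs; uniqueness then identifies them with the limits, and the uniformity of the convergence is already part of Proposition~\ref{thh3}.

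First I would make the equation explicit for the minus half-game, whose states are $-,--,-^*,0^*$ (taking $k_-=0$). Reading the kernel off Tables~\ref{eenrn41}--\ref{eenrn43} (subtract the identity, divide by $h_n$), the payoff $g$ equals $-1$ on $\{-,--,-^*\}$ and $0$ on $0^*$ and does not depend on the actions. Since $-^*$ (payoff $-1$) and $0^*$ (payoff $0$) are absorbing and the transient dynamics are linear in the belief, I would look for a solution of the form $v(p)=-p_{-^*}+\psi(p_-,p_{--})$ with $\psi$ positively homogeneous of degree $1$, and write $\psi(p_-,p_{--})=(p_-+p_{--})\,\Psi\!\big(p_{--}/(p_-+p_{--})\big)$, where $\Psi(p)=w^-_\lambda(p)$ is exactly the quantity in Lemma~\ref{dwjbckdsjb} (mass $p$ on $--$, $1-p$ on $-$). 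Substituting $\nabla v=(\psi_-,\psi_{--},-1,0)$ and using the identities $\psi_--\psi_{--}=-\Psi'$ and $-p_-\psi_--p_{--}\psi_{--}=-(p_-+p_{--})\Psi$, the belief-space equation collapses, after dividing by $p_-+p_{--}$, to the scalar ODE
$$\lambda\Psi(p)=-\lambda+\texttt{val}_{I\times J}\!\left[M(p)\right],$$
where the $3\times 2$ matrix $M(p)$ (rows $T,B,Q$, columns $L,R$) has entries $(T,L)=(B,R)=-\tfrac{p}{2}\Psi'$, $(T,R)=(B,L)=(1-p)\Psi'$, and constant last row $(Q,\cdot)=-\Psi-p$.

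Next I would evaluate the value operator in closed form. The $2\times 2$ block on $\{T,B\}\times\{L,R\}$ is symmetric, so its value is the average $\tfrac{2-3p}{4}\Psi'$, attained at $(\tfrac12,\tfrac12)$; since the $Q$-row is constant in player~2's action, a short saddle-point argument gives $\texttt{val}[M]=\max\!\big(\tfrac{2-3p}{4}\Psi',\,-\Psi-p\big)$. The equation thus splits into two regimes. Where $Q$ is optimal, $\lambda\Psi=-\lambda-\Psi-p$, i.e.\ $\Psi=-\tfrac{p+\lambda}{1+\lambda}$, and the optimality condition $-\Psi-p\ge\tfrac{2-3p}{4}\Psi'$ reduces exactly to $p\le\tfrac{4\lambda+2}{4\lambda+3}$; in the complementary regime the linear ODE $\tfrac{2-3p}{4}\Psi'-\lambda\Psi=\lambda$ integrates (note $2-3p<0$) to $\Psi=-1+C(3p-2)^{-4\lambda/3}$, with $C$ fixed by continuity at $p^\ast=\tfrac{4\lambda+2}{4\lambda+3}$, giving the stated constant. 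A direct check that both branches share the value and the derivative $-\tfrac{1}{1+\lambda}$ at $p^\ast$ shows $w^-_\lambda\in\mathcal C^1$, so it is a genuine classical solution. Assertion~2 is entirely parallel: plus replaces minus, the minimizer now holds the escape column $Q$, the $2\times 2$ block becomes the symmetric $3\times 3$ block on $\{T,M,B\}\times\{L,M,R\}$ whose value is $\tfrac{2(3-4p)}{9}\Psi'$, and one obtains $\lambda\Psi=\lambda+\min\!\big(\tfrac{2(3-4p)}{9}\Psi',\,-\Psi+p\big)$, yielding the threshold $\tfrac{9\lambda+6}{9\lambda+8}$ and the second formula.

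I expect the main obstacle to be the bookkeeping in the two reductions: correctly extracting the drift $p*q(i,j)$ from the transition tables, justifying the homogeneous/affine-in-$p_{-^*}$ ansatz that turns the PDE on the simplex into a scalar ODE, and solving the value operator so that the switching locus comes out at precisely $\tfrac{4\lambda+2}{4\lambda+3}$ (resp.\ $\tfrac{9\lambda+6}{9\lambda+8}$). Once the ODE and its two regimes are in hand, the integration, the matching of the constant at $p^\ast$, and the $\mathcal C^1$ verification are routine, and uniqueness from Proposition~\ref{thh3} closes the argument.
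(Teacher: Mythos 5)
Your plan follows the paper's own route: both invoke Proposition~\ref{thh3} for the state-blind half-games and then verify that the candidate, extended to the belief simplex, is a classical (hence viscosity) solution of the max/min-form PDE, whose uniqueness identifies it with the limit. Your homogeneous reduction $v(\overline p)=-p_{-^*}+(p_-+p_{--})\,\Psi\bigl(p_{--}/(p_-+p_{--})\bigr)$ is literally the paper's definition of $\overline w_\lambda$, and your closed form $\texttt{val}[M]=\max\bigl(\tfrac{2-3p}{4}\Psi',\,-\Psi-p\bigr)$ is the paper's observation that either $Q$ or $\tfrac12 T+\tfrac12 B$ is optimal for player~1.

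There is, however, one step your proposal omits, and it is not among the items you list as routine. Verifying that the glued function is a classical solution requires checking, in \emph{each} region, that the maximum in the value operator is attained by the branch you used to solve the ODE there. You check this only in the first region: your inequality $-\Psi-p\ge\tfrac{2-3p}{4}\Psi'$, which yields the threshold $\tfrac{4\lambda+2}{4\lambda+3}$, is computed with the linear branch $\Psi=-\tfrac{p+\lambda}{1+\lambda}$. For $p\ge\tfrac{4\lambda+2}{4\lambda+3}$ you must prove the reverse inequality $\tfrac{2-3p}{4}\Psi'\ge-\Psi-p$ for the nonlinear branch $\Psi=-1+C(3p-2)^{-4\lambda/3}$; otherwise the candidate is only known to solve the linear ODE there, not the equation with the max. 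This is where the paper spends genuine effort: the inequality reduces to $f_\lambda(u)\ge 0$ on $[0,1/3)$, where $u$ is the relative weight of state $-$ and
$$f_\lambda(u)=-u+\frac{1}{4\lambda+3}\left(\frac{4\lambda}{(4\lambda+3)(1-3u)}\right)^{4\lambda/3},$$
proved by showing that $f_\lambda'$ is strictly increasing and vanishes exactly at $u=\tfrac{1}{4\lambda+3}$, so that $\min f_\lambda=f_\lambda\bigl(\tfrac{1}{4\lambda+3}\bigr)=0$. The analogous check is needed for the plus half-game. With this added, your argument is complete and coincides with the paper's.
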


This lemma is very important in the proof of Theorem~\ref{thh2}. First (in \S\ref{rPART2}), we explain why $w_\lambda^+$ and $w_\lambda^-$ are good candidates for $v_{0,\lambda}^+$ and $v_{0,\lambda}^-$ respectively. Afterwards (in \S\ref{rPART3}), we verify that we indeed have $v_{0,\lambda}^+ = w_\lambda^+$ and $v_{0,\lambda}^- = w_\lambda^-$.

\subsubsection{Finding good candidates for $v_{0,\lambda}^+$ and $v_{0,\lambda}^-$}
\label{rPART2}

In this section, we provide a heuristic proof of Lemma~\ref{MainLemma}. The formal proof appears in \S\ref{rPART3}.

\begin{proof}[Heuristic justification of Lemma~\ref{MainLemma}]
\ \\
We consider two families (each parametrized by $h\in (0,1]$) of two-signal POMDPs, $\widetilde G_h^-$ and $\widetilde G_h^+$. These POMDPs are constructed so that they have the same dynamics as the games $G_{h,\lambda}^-$ and $G_{h,\lambda}^+$, respectively; see Proposition~\ref{xc44e} below.

\begin{definition}
For a fixed $h\in (0,1]$, the \emph{POMDP $\widetilde G_h^-$} (respectively \emph{$\widetilde G_h^+$}) has action space $\{C, Q\}$, state space $\{\omega_1, \omega_2, \omega_3^*, 0^*\}$ (respectively $\{\omega_4, \omega_5, \omega_6^*, 0^*\}$), and signal space $\{\alpha,\beta\}$. The payoff is independent of the actions: the states $\omega_1, \omega_2, \omega_3^*$ have payoff $-h$, the states $\omega_4, \omega_5, \omega_6^*$ have payoff $+h$, and the state $0^*$ has payoff $0$. The states $\omega_3^*, \omega_6^*, 0^*$ are absorbing. The transition probabilities are described by Figures~\ref{transitions_half} and \ref{transitions_halff2}. The arrow from state $s_1$ to state $s_2$ with label $(X, p, \gamma)$ indicates that if the current state is $s_1$ and the player chooses action $X$, then with probability $p$ the state moves to $s_2$ and the player receives the signal $\gamma$. 

\noindent In $\widetilde G_h^-$, the goal of the player is to \textbf{maximize} the $\lambda$-discounted payoff.

\noindent In $\widetilde G_h^+$, the goal of the player is to \textbf{minimize} the $\lambda$-discounted payoff.
\end{definition}

\begin{figure}[h]
\centering 
	\subcaptionbox{\label{transitions_half} Transitions in $\widetilde G_h^-$.}
	[.45\linewidth]{
		  \begin{tikzpicture}[font = {\normalsize}, 
    node/.style = {font = \large, circle, draw = black!100, very thick, minimum size =10mm},
    every text node part/.style={align=center}]
      \node[node] (M) {$\pmb{\omega_1}$};
      \node[node] (MM) [below=2.3cm of M]{$\pmb{\omega_2}$};
      \node[node] (PP) [right=2.1cm of M]{$\pmb{0^*}$};
      \node[node] (MS) [below=1.5cm of MM]{$\pmb{\omega_3^*}$}; 
      \node[draw=none,fill=none] (N1) [below left=0cm and 0cm of MM]
{\textbf{\underline{belief \;$\pmb{p}$}}};
      \node[draw=none,fill=none] (N2) [below left=0cm and 0cm of M]
{\textbf{\underline{belief \;$\pmb{1-p}$}}};

      \path[ultra thick, ->]  (M) edge[bend left]     node [right]	
                                {$\pmb{(C,h/2,\beta)}$} (MM);
      \path[ultra thick, ->]  (M) edge[loop above]     node [above]	
                                {$\pmb{(C,1/2,\alpha)}$} ();
      \path[ultra thick, ->]  (M) edge[loop left]     node [left]	
                                {$\pmb{(C,(1-h)/2,\beta)}$} ();
      \path[ultra thick, ->]  (MM) edge[bend left]     node [left]	
                                {$\pmb{(C,h/4,\alpha)}$} (M);
      \path[ultra thick, ->]  (MM) edge     node [left]	
                                {$\pmb{(Q,h,\alpha)}$} (MS);
      \path[ultra thick, ->]  (MM) edge[out=320,in=290,looseness=7]     node [right]	
                                {$\pmb{(Q,1-h,\beta)}$} (MM);
      \path[ultra thick, ->]  (MM) edge[loop left]     node [left]	
                                {$\pmb{(C,(2-h)/4,\alpha)}$} ();
      \path[ultra thick, ->]  (MM) edge[loop right]     node [right]	
                                {$\pmb{(C,1/2,\beta)}$} ();
      \path[ultra thick, ->]  (M) edge     node [below]	
                                {$\pmb{(Q,h,\alpha)}$} (PP);
      \path[ultra thick, ->]  (M) edge[out=10,in=40,looseness=7]     node [above right = 0cm and -0.7cm]	
                                {$\pmb{(Q,1-h,\beta)}$} (M);
  \end{tikzpicture}
}%
\hspace{0.07\linewidth}
\subcaptionbox{\label{transitions_halff2} Transitions in $\widetilde G_h^+$.}
[0.45\linewidth]{
	  \begin{tikzpicture}[font = {\normalsize}, 
    node/.style = {font = \large, circle, draw = black!100, very thick, minimum size =10mm},
    every text node part/.style={align=center}]
      \node[node] (P)  {$\pmb{\omega_4}$};
      \node[node] (MM) [left=2.1cm of P]{$\pmb{0^*}$};
      \node[node] (PP) [below=2.3cm of P]{$\pmb{\omega_5}$};
      \node[node] (PS) [below=1.5cm of PP]{$\pmb{\omega_6^*}$};
      \node[draw=none,fill=none] (N1) [below right=0cm and 0cm of PP]
{\textbf{\underline{belief \;$\pmb{p}$}}};
      \node[draw=none,fill=none] (N1) [below right=0cm and 0cm of P]
{\textbf{\underline{belief \;$\pmb{1-p}$}}};

      \path[ultra thick, ->]  (PP) edge[bend right]     node [right]
                                {$\pmb{(C,2h/9,\alpha)}$} (P);
      \path[ultra thick, ->]  (P) edge[loop above]     node [above]	
                                {$\pmb{(C,1/3,\alpha)}$} ();
      \path[ultra thick, ->]  (P) edge[bend right]     node [left]	
                                {$\pmb{(C,2h/3,\beta)}$} (PP);
      \path[ultra thick, ->]  (P) edge[loop right]     node [right]	
                                {$\pmb{(C,2(1-h)/3,\beta)}$} ();
      \path[ultra thick, ->]  (PP) edge     node [right]	
                                {$\pmb{(Q,h,\alpha)}$} (PS);
      \path[ultra thick, ->]  (PP) edge[out=220,in=250,looseness=7]     node [left]	
                                {$\pmb{(Q,1-h,\beta)}$} (PP);
      \path[ultra thick, ->]  (PP) edge[loop left]     node [left]
                                {$\pmb{(C,2/3,\beta)}$} ();
      \path[ultra thick, ->]  (PP) edge[loop right]     node [right]	
                                {$\pmb{(C, (3-2h)/9,\alpha)}$} ();
      \path[ultra thick, ->]  (P) edge     node [below]	
                                {$\pmb{(Q,h,\alpha)}$} (MM);
      \path[ultra thick, ->]  (P) edge[out=170,in=140,looseness=7]     node [above left = 0cm and -0.7cm]	
                                {$\pmb{(Q,1-h,\beta)}$} (P);
  \end{tikzpicture}
}%
\caption{Transitions.}
\end{figure}

\begin{remark}
\label{irenNeman2}
Assume that in $\widetilde G_h^-$ (respectively $\widetilde G_h^+$), the current state is $\omega_2$ (respectively $\omega_5$) with probability $p$ and $\omega_1$ (respectively $\omega_4$) with probability $1-p$.
In the optimal strategy, the player first plays $C$ to reduce his belief $\widetilde p$ that the current state is $\omega_2$ (respectively, $\omega_5$) to a sufficiently small value, and then starts playing $Q$. Note that the player’s belief that the current state is $\omega_2$ (respectively, $\omega_5$) after playing $C$ can be computed: it is $p - \frac{h p}{2}$ (respectively $p - \frac{2 h p}{3}$) with probability $1/2$ (respectively $1/3$), and $p + h - h p$ with probability $1/2$ (respectively $2/3$). See Figures~\ref{xn1xn1} and \ref{xn1xn2}. \demo
\end{remark}

\begin{figure}[h]
\centering 
\subcaptionbox{\label{xn1xn1} The transition starting from belief~$p$ when the maximizing player chooses $C$ in~$\widetilde G_h^-$.}
[0.45\linewidth]{
\begin{tikzpicture}
\fill (0,0) circle (3pt) node[below] 
{$\phantom{h}\mathbf{p}\phantom{h}$} (2,0) circle (3pt) 
node[below] 
{$\mathbf{p + h - hp}$} (-3,0) circle (3pt) node[below] 
{$\mathbf{p - \frac{hp}{2}}$};
\draw[-Stealth, line width = 2pt] (0,0) to [out=120,in=70] (-3,0) 
node
[label={[align=center, label distance=0.7cm]70:
$\mathbf{Prob. \frac{1}{2}}$}] {};
\draw[-Stealth, line width = 2pt] (0,0) to [out=70,in=120] (2,0) node
[label={[align=center, label distance=0.35cm]120:
$\mathbf{Prob. \frac{1}{2}}$}] {};
\draw[line width = 2pt] (-4,0)--(4,0);
\end{tikzpicture}}
\hspace{0.07\linewidth}
	\subcaptionbox{\label{xn1xn2} The transition starting from belief~$p$ when the minimizing player chooses $C$ in~$\widetilde G_h^+$.}
	[.45\linewidth]{
\begin{tikzpicture}
\fill (0,0) circle (3pt) node[below] 
{$\phantom{h}\mathbf{p}\phantom{h}$} (2,0) circle (3pt) 
node[below] 
{$\mathbf{p + h - hp}$} (-3,0) circle (3pt) node[below] 
{$\mathbf{p - \frac{2 hp}{3}}$};
\draw[-Stealth, line width = 2pt] (0,0) to [out=120,in=70] (-3,0) 
node
[label={[align=center, label distance=0.7cm]70:
$\mathbf{Prob. \frac{1}{3}}$}] {};
\draw[-Stealth, line width = 2pt] (0,0) to [out=70,in=120] (2,0) node
[label={[align=center, label distance=0.35cm]120:
$\mathbf{Prob. \frac{2}{3}}$}] {};
\draw[line width = 2pt] (-4,0)--(4,0);
\end{tikzpicture}}
\caption{Dynamics of the belief.}
\end{figure}

We denote by $\widetilde G_{h,\lambda}^-$ (respectively $\widetilde G_{h,\lambda}^+$) the POMDP $\widetilde G_{h}^-$ (respectively $\widetilde G_{h}^+$) with $\lambda$-discounted payoff, and we denote by $\widetilde v^{h,\lambda}_-$ (respectively $\widetilde v^{h,\lambda}_+$) the value of $\widetilde G_{h,\lambda}^-$ (respectively $\widetilde G_{h,\lambda}^+$).

\begin{proposition}
\label{xc44e}
For each fixed $\lambda \in (0,1]$ and $h \in [0,1]$, we have: 
\begin{enumerate}
\item The value $\widetilde v^{h, \lambda}_-$ of $\widetilde G_{h,\lambda}^-$ coincides with the value $v_{h, \lambda}^-$ of $G_{h,\lambda}^-$.
\item The value $\widetilde v^{h, \lambda}_+ $ of $\widetilde G_{h,\lambda}^+$ coincides with the value $v_{h, \lambda}^+ $ of $G_{h,\lambda}^+$.
\end{enumerate}
\end{proposition}

This proposition follows from Proposition~\ref{shapley_dop}: both $\widetilde v^{h, \lambda}_- $ and $v_{h, \lambda}^- $ are the unique solutions of a functional equation. The same is true for $\widetilde v^{h, \lambda}_+$ and $v_{h, \lambda}^+ $. We do not give a more detailed proof, since this entire subsection is dedicated to a heuristic search for a solution and thus we do not need to give precise proofs.

Note that in $\widetilde G_h^-$, if the player takes action $C$, then he receives the signal $\alpha$ with probability $1/2$ and the signal $\beta$ with probability $1/2$, while in $\widetilde G_h^+$, if the player takes action~$C$, then he receives the signal $\alpha$ with probability $1/3$ and the signal $\beta$ with probability $2/3$.

\noindent \textbf{The value $\pmb{\widetilde v^{h,\lambda}_-}$ of the 
$\pmb{\lambda}$-discounted POMDP 
$\pmb{\widetilde G_{h,\lambda}^-}$ (heuristic computation).} 
We denote by $p$ (respectively $1-p$) the probability that the initial state is $\omega_2$ (respectively $\omega_1$).

By Proposition~\ref{shapley_dop} we have
\begin{equation}
\widetilde v^{h,\lambda}_-(p) = 
-\lambda h + (1-\lambda h)\max\bigg\{
\underbrace{- h p }_
{\substack{\text{Player chooses } Q, \\ 
\text{the signal is } \alpha}}
+
\underbrace{(1-h) \widetilde v^{h,\lambda}_-(p)}_
{\substack{\text{Player chooses } Q, \\ 
\text{the signal is } \beta}}
;
\underbrace{\frac{1}{2} \widetilde v^{h,\lambda}_-
\left(p- \frac{h p}{2}\right)}_
{\substack{\text{Player chooses } C, \\ 
\text{the signal is } \alpha}}
+
\underbrace{\frac{1}{2} 
\widetilde v^{h,\lambda}_-(p + h - h p)}_
{\substack{\text{Player chooses } C, \\ 
\text{the signal is } \beta}} \bigg\}.
\label{eqE1}
\end{equation}

From the structure of the POMDP, it is natural to assume that there exists $p^* \in [0,1]$ such that the player prefers action $Q$ when $p \le p^*$, and action $C$ when $p > p^*$.
In that case, we have for $p \le p^*$
$$-\lambda h + (1-\lambda h) \left(- h p + (1-h)
\widetilde v^{h,\lambda}_-(p) \right) = 
\widetilde v^{h,\lambda}_-(p) \iff 
\widetilde v^{h,\lambda}_-(p) = 
\frac{(h \lambda - 1) p - \lambda}{1+(1-h)\lambda}.$$

$p^*$ is the approximate solution of the equation
\begin{multline*}
-\lambda h + (1-\lambda h) \left(- h p + (1-h) 
\frac{(h \lambda - 1) p - \lambda}{1+(1-h)\lambda} \right) = \\
-\lambda h + (1-\lambda h) 
\left(\frac{(h \lambda - 1) \left(p - \frac{h p}{2}\right) - \lambda}
{2(1+(1-h)\lambda)} + 
\frac{(h \lambda - 1) (p + h - h p) - \lambda}{2(1+(1-h)\lambda)}\right),
\end{multline*}
from which it is easy to find $p^*$ and see that 
$$p^* =
\frac{4\lambda + 2 - 2 \lambda h}{4\lambda + 3 - 7 \lambda h}
\xrightarrow{h \to 0}
\frac{4\lambda + 2}{4\lambda + 3}.$$
So we take $p^* = \frac{4\lambda + 2}{4\lambda + 3}$.
For $p \ge p^*$, $\widetilde v^{h,\lambda}_-(p)$ 
is a solution of the equation (in $f(p)$)
$$f(p) = -\lambda h + (1- \lambda h) 
\left(\frac{1}{2} f\left(p- \frac{h p}{2}\right) + \frac{1}{2} 
f(p + h - h p)\right).$$
Assuming that $\widetilde v^{h,\lambda}_-(p)$ is differentiable if 
$p > p^*$, we have
$$\widetilde v^{h,\lambda}_-(p) = -\lambda h + (1 - \lambda h)
\left(\frac{1}{2} \left(\widetilde v^{h,\lambda}_-(p) - \frac{1}{2} h p \; 
\left(\widetilde v^{h,\lambda}_-\right)'(p) \right) + 
\frac{1}{2} \left(\widetilde v^{h,\lambda}_-(p) + 
(h - h p)\left(\widetilde v^{h,\lambda}_-\right)'(p)\right)\right) + o(h).$$
Thus we have for small $h$
$$\begin{cases*} 
\lambda \widetilde v^{h,\lambda}_-(p) \approx - \lambda
-\frac{1}{4} p \left(\widetilde v^{h,\lambda}_-\right)'(p) 
+ \frac{1}{2} (1 - p) \left(\widetilde v^{h,\lambda}_-\right)'(p), \quad
\text{if } p \in (p^*,1);\\
\widetilde v^{h,\lambda}_-(p^*) = 
\frac{- p^* - \lambda}{1+\lambda}.
\end{cases*}$$

By solving the differential equation, we obtain for small $h$
$$\widetilde v^{h,\lambda}_- (p) = -1 + C (3 p - 2)^{-4\lambda/3},$$
where $C \in \R$. 
Taking into account the boundary condition, we obtain for small $h$
$$\widetilde v^{h,\lambda}_- (p) = -1 + 
\frac{(4\lambda)^{4\lambda/3}}
{(1+\lambda) (3 + 4\lambda)^{1+(4\lambda/3)}} 
\left(3 p - 2\right)^{-4\lambda/3}.$$

Thus we have for small $h$
\begin{equation*}
\label{fffg55f2}
\widetilde v^{h,\lambda}_- (p) \approx w^-_\lambda (p).
\end{equation*}

\noindent \textbf{The value $\widetilde v^{h,\lambda}_+$ of the
$\pmb{\lambda}$-discounted POMDP $\pmb{\widetilde G_{h,\lambda}^+}$ (heuristic computation).} 
We denote by $p$ (respectively $1-p$) the probability that the initial state is $\omega_5$ (respectively $\omega_4$).

By Proposition~\ref{shapley_dop} we have
\begin{equation}
\widetilde v^{h,\lambda}_+(p) = 
\lambda h + (1-\lambda h)\min\bigg\{
\underbrace{h p }_
{\substack{\text{Player chooses } Q, \\ 
\text{the signal is } \alpha}}
+
\underbrace{(1-h) \widetilde v^{h,\lambda}_+(p)}_
{\substack{\text{Player chooses } Q, \\ 
\text{the signal is } \beta}}
;
\underbrace{\frac{1}{3} \widetilde v^{h,\lambda}_+ 
\left(p- \frac{2 h p}{3}\right)}_
{\substack{\text{Player chooses } C, \\ 
\text{the signal is } \alpha}}
+
\underbrace{\frac{2}{3} 
\widetilde v^{h,\lambda}_+(p + h - h p)}_
{\substack{\text{Player chooses } C, \\ 
\text{the signal is } \beta}} \bigg\}.
\label{eqE2}
\end{equation}

By performing computations analogous to those above, we have for small $h$
\begin{equation*}
\label{fffg55f1}
\widetilde v^{h,\lambda}_+ (p) \approx w^+_\lambda (p) \qedhere
\end{equation*}
\end{proof}

\subsubsection{Proof that $v_{0,\lambda}^+ = w_\lambda^+$ and $v_{0,\lambda}^- = w_\lambda^-$}
\label{rPART3}

We now present a formal proof of Lemma~\ref{MainLemma}. First, we give a simple lemma.

\begin{lemma}
\label{lemalema4}
Let $x, y, z \in \R$. In the one-shot matrix game
\begin{equation*}
\begin{tabular}{|c|c|c|}
\hline
& $L$ & $R$\\
\hline
$T$ & $x$ & $y$\\
\hline
$B$ & $y$ & $x$\\
\hline
$Q$ & $z$ & $z$\\
\hline
\end{tabular} 
\end{equation*}

\begin{enumerate}
  \item $\frac{1}{2} L + \frac{1}{2} R$ is an optimal strategy of Player~2;
  \item If $z \ge \frac{1}{2} x + \frac{1}{2} y$, then $Q$ is an optimal strategy of Player~1;
  \item If $z < \frac{1}{2} x + \frac{1}{2} y$, then $\frac{1}{2} T + \frac{1}{2} B$ is an optimal strategy of Player~1.
\end{enumerate}
\end{lemma}

\begin{proof}[Proof of Lemma~\ref{MainLemma}]

We prove assertion 1). First, note that $w_\lambda^-(p)$ is 
continuously differentiable.
If $p \neq \frac{4\lambda + 2}{4\lambda + 3}$, this is clear. To prove the 
differentiability at 
$p = \frac{4\lambda + 2}{4\lambda + 3}$, 
one may verify that
$$\left(-\frac{p + \lambda}{1+\lambda}\right)'
\left(\frac{4\lambda + 2}{4\lambda + 3}\right) = 
\left(-1 + 
\frac{ (4 \lambda)^{4\lambda/3}}
{(1+\lambda) (3 + 4\lambda)^{1+(4\lambda/3)}} 
\left(3 p - 2\right)^{-4\lambda/3}\right)'
\left(\frac{4\lambda + 2}{4\lambda + 3}\right) = 
-\frac{1}{1+\lambda}.$$

Denote $I = \{T, B, Q\}, J = \{L, R\}$.
Also, denote by $\overline p = (p_1,p_2,p_3,p_4)$ a probability distribution over the states of $G_{h,\lambda}^-$, where
$$p_i = 
\begin{cases}
\text{the probability that the initial state is } \omega_1, &\text{if }i=1;\\
\text{the probability that the initial state is } \omega_2, &\text{if }i=2;\\
\text{the probability that the initial state is } \omega_3^*, &\text{if }i=3;\\
\text{the probability that the initial state is } 0^*, &\text{if }i=4.
\end{cases}$$
Denote by $\overline v_{h,\lambda}(\overline p)$ the value of the 
$\lambda$-discounted game $G_{h,\lambda}^-$ with 
initial probability distribution $\overline p$. It is clear that
\begin{equation}
\label{dsn6j2k22}
\overline v_{h,\lambda}(\overline p) = 
(p_1+p_2) v_{h,\lambda}^-\left(\frac{p_2}{p_1+p_2}\right) - p_3.
\end{equation}

Since $G_{h,\lambda}^-$ is a state-blind game, by \cite[Theorem~2]{Nov24a} we know that $\overline v_{0,\lambda} (\overline p)$ is the unique viscosity solution of the partial differential equation
\begin{equation}
\label{mc3ez8}
\lambda v(\overline p) = -\lambda(p_1+p_2+p_3) + \texttt{Val}_{I\times J} 
\left[\langle \overline p * q(i,j), 
\nabla v(\overline p) \rangle \right];
\end{equation}

We are going to check that 
$$\overline w_\lambda(\overline p) := 
(p_1+p_2) w_\lambda^- \left(\frac{p_2}{p_1+p_2}\right) - p_3$$
is a classical solution of \eqref{mc3ez8}.
First, we need to compute the kernel $q$ for 
$G_{h,\lambda}^-$.
Using Tables~\ref{eenrn43}--\ref{eenrn41} (with the replacement of state $\omega_5$ by state $0^*$), we obtain
\begin{align*}
  &q(\omega_2 \mid i,j, \omega_2)=
  \begin{cases}
    0, &\text{if } (i,j)=(T,R) \text{ or } (B,L);\\
    -\frac{1}{2}, &\text{if } (i,j)=(T,L) \text{ or } (B,R);\\
    -1, &\text{if } (i,j)=(Q,L) \text{ or } (Q,R).
  \end{cases}
  &q(\omega_1 \mid i,j, \omega_2)=
  \begin{cases}
    \frac{1}{2}, &\text{if } (i,j)=(T,L) \text{ or } (B,R);\\
    0, &\text{otherwise.}
  \end{cases}\\
  &q(\omega_3^* \mid i,j, \omega_2)=
  \begin{cases}
    1, &\text{if } (i,j)=(Q,L) \text{ or } (Q,R);\\
    0, &\text{otherwise.}
  \end{cases}
  &q(0^* \mid i,j, \omega_2) = 0 \qquad \forall \; (i,j);\\
  &q(\omega_1 \mid i,j, \omega_1)=
  \begin{cases}
    0, &\text{if } (i,j)=(T,L) \text{ or } (B,R);\\
    -1, &\text{otherwise}.
  \end{cases}
  &q(\omega_2 \mid i,j, \omega_1)=
  \begin{cases}
    1, &\text{if } (i,j)=(T,R) \text{ or } (B,L);\\
    0, &\text{otherwise.}
  \end{cases}\\
  &q(\omega_3^* \mid i,j, \omega_1)= 0 \qquad \forall \; (i,j);
  &q(0^* \mid i,j, \omega_1)=
  \begin{cases}
    1, &\text{if } (i,j)=(Q,L) \text{ or } (Q,R);\\
    0, &\text{otherwise.}
  \end{cases}\\
  &q(s \mid i,j, \omega_3^*)= 0 \qquad \forall \; (i,j), \; s;
  &q(s \mid i,j, 0^*)= 0 \qquad \forall \; (i,j), \; s.
\end{align*}

Hence
$$\overline p * q(i,j) =
  \begin{cases}
    (\frac{1}{2} p_2,-\frac{1}{2} p_2,0,0), &
\text{if } (i,j)=(T,L) \text{ or } (B,R);\\
    (-p_1,p_1,0,0), &\text{if } (i,j)=(T,R) \text{ or } (B,L);\\
    (-p_1, -p_2, p_2, p_1), &\text{if } (i,j)=(Q,L) \text{ or } (Q,R).
  \end{cases}$$

Now, we need to find the value of the one-shot matrix game
\begin{equation}
\begin{tabular}{|c|c|c|}
\hline
& $L$ & $R$\\
\hline
$T$ & $\frac{p_2}{2} \left(
\frac{\partial \, \overline w_{\lambda}}{\partial p_1} - 
\frac{\partial \, \overline w_{\lambda}}{\partial p_2}
\right)$ & 
$- p_1 \left(
\frac{\partial \, \overline w_{\lambda}}{\partial p_1} - 
\frac{\partial \, \overline w_{\lambda}}{\partial p_2}
\right)$\\
\hline
$B$ & $- p_1 \left(
\frac{\partial \, \overline w_{\lambda}}{\partial p_1} - 
\frac{\partial \, \overline w_{\lambda}}{\partial p_2}
\right)$ & 
$\frac{p_2}{2} \left(
\frac{\partial \, \overline w_{\lambda}}{\partial p_1} - 
\frac{\partial \, \overline w_{\lambda}}{\partial p_2}
\right)$\\
\hline
$Q$ & $-p_1 \frac{\partial \, \overline w_{\lambda}}{\partial p_1} - 
p_2 \frac{\partial \, \overline w_{\lambda}}{\partial p_2} - p_2$ & 
$-p_1 \frac{\partial \, \overline w_{\lambda}}{\partial p_1} - 
p_2 \frac{\partial \, \overline w_{\lambda}}{\partial p_2} - p_2$\\
\hline
\end{tabular} 
\label{ds3MA77x}
\end{equation}

We consider two cases.

\textbf{Case 1:} $\frac{p_2}{p_1+p_2} < \frac{4\lambda + 2}{4\lambda + 3}$.
By \eqref{dsn6j2k22} we have
\begin{align*}
&\frac{\partial \, \overline w_{\lambda}}{\partial p_1} = 
-\frac{\lambda}{1+\lambda}; \qquad
\frac{\partial \, \overline w_{\lambda}}{\partial p_2} = -1; \qquad
\frac{\partial \, \overline w_{\lambda}}{\partial p_3} = -1; \qquad
\frac{\partial \, \overline w_{\lambda}}{\partial p_4} = 0.
\end{align*}

Hence, the game \eqref{ds3MA77x} may be rewritten as follows.
\begin{equation*}
	\begin{tabular}{|c|c|c|}
\hline
& $L$ & $R$\\
\hline
$T$ & $\frac{p_2}{2(1+\lambda)}$ & $\frac{-p_1}{1+\lambda}$\\
\hline
$B$ & $\frac{-p_1}{1+\lambda}$ & $\frac{p_2}{2(1+\lambda)}$\\
\hline
$Q$ & $\frac{\lambda p_1}{1+\lambda}$ & $\frac{\lambda p_1}{1+\lambda}$\\
\hline
	\end{tabular} 
\end{equation*}

We have
\begin{align*}
\left(\frac{1}{2} \cdot \frac{p_2}{2(1+\lambda)} + 
\frac{1}{2} \cdot \frac{-p_1}{1+\lambda}\right) -
\frac{\lambda p_1}{1+\lambda} &= 
\frac{p_2 - (4\lambda +2)p_1}{4(1+\lambda)} = 
\frac{(4\lambda +3)p_2 - (4\lambda +2)(p_1+p_2)}{4(1+\lambda)} \\ 
&= \frac{(4\lambda +3)(p_1+p_2)\left( \frac{p_2}{p_1+p_2} - 
\frac{4\lambda +2}{4\lambda+3}\right)}
{4(1+\lambda)} \le 0.
\end{align*}

Hence, by Lemma~\ref{lemalema4}, $Q$ is an optimal strategy for Player~1. Now, it remains to verify that the partial differential equation \eqref{mc3ez8} holds. We have
\begin{align*}
&\lambda \overline w_{\lambda}(\overline p) + 
\lambda(p_1+p_2+p_3) - \texttt{Val}_{I\times J} 
\left[\langle \overline p * q(i,j), 
\nabla \overline w_{\lambda}(\overline p) \rangle \right] \\
=& 
\left(- \lambda (p_1+p_2) 
\left(\frac{\frac{p_2}{p_1+p_2}+\lambda}{1+\lambda}\right) -
\lambda p_3\right) + \lambda(p_1+p_2+p_3) - \frac{\lambda p_1}{1+\lambda} \\
=&
\frac{- \lambda p_2 - \lambda^2 (p_1+p_2)}{1+\lambda} +
\frac{\lambda (p_1+p_2)(1+\lambda)}{1+\lambda} -
\frac{\lambda p_1}{1+\lambda}=0.
\end{align*}

\textbf{Case 2:} $\frac{p_2}{p_1+p_2} \ge \frac{4\lambda + 2}{4\lambda + 3}$.
In this case, we have
\begin{align*}
&\hspace{-0.4cm}\frac{\partial \, \overline w_{\lambda}}{\partial p_1} = 
- 1 + 
\frac{1}{(4\lambda + 3)(\lambda + 1)}\left(\frac{ 4\lambda (p_1+p_2)}
{(4\lambda+3)(p_2 - 2 p_1)}\right)^{4\lambda/3} + \:
\frac{p_2 (p_1+p_2)^{4\lambda/3}}{\lambda + 1}\left(\frac{ 4\lambda}
{(4\lambda+3)(p_2 - 2 p_1)}\right)^{1+(4\lambda/3)};     \\
&\hspace{-0.4cm}\frac{\partial \, \overline w_{\lambda}}{\partial p_2} =
- 1 + 
\frac{1}{(4\lambda + 3)(\lambda + 1)}\left(\frac{ 4\lambda (p_1+p_2)}
{(4\lambda+3)(p_2 - 2 p_1)}\right)^{4\lambda/3} - \:
\frac{p_1 (p_1+p_2)^{4\lambda/3}}{\lambda + 1}\left(\frac{ 4\lambda}
{(4\lambda+3)(p_2 - 2 p_1)}\right)^{1+(4\lambda/3)}.
\end{align*}

Hence
\begin{align*}
&\frac{\partial \, \overline w_{\lambda}}{\partial p_1} - 
\frac{\partial \, \overline w_{\lambda}}{\partial p_2} = 
\frac{1}{1+\lambda}
\left(\frac{ 4\lambda (p_1+p_2)}
{(4\lambda+3)(p_2 - 2 p_1)}\right)^{1+(4\lambda/3)};\\
&-p_1 \frac{\partial \, \overline w_{\lambda}}{\partial p_1} 
-p_2 \frac{\partial \, \overline w_{\lambda}}{\partial p_2} - p_2 
= p_1 - \frac{p_1+p_2}{(1+\lambda)(3+4\lambda)}
\left(\frac{ 4\lambda (p_1+p_2)}
{(4\lambda+3)(p_2 - 2 p_1)}\right)^{4\lambda/3}.
\end{align*}

A direct calculation shows that
\begin{align*}
&\left(\frac{1}{2} \cdot \frac{p_2}{2}
\left(\frac{\partial \, \overline w_{\lambda}}{\partial p_1} - 
\frac{\partial \, \overline w_{\lambda}}{\partial p_2}\right) + 
\frac{1}{2} \cdot (-p_1) 
\left(\frac{\partial \, \overline w_{\lambda}}{\partial p_1} - 
\frac{\partial \, \overline w_{\lambda}}{\partial p_2}\right)\right) + 
\left(p_1 \frac{\partial \, \overline w_{\lambda}}{\partial p_1} 
+ p_2 \frac{\partial \, \overline w_{\lambda}}{\partial p_2} + 
p_2\right) \\
&= 
(p_1 + p_2) \left(-\frac{p_1}{p_1+p_2} + \frac{1}{4\lambda+3}
\left(\frac{4\lambda}
{(4\lambda+3)\left(1-\frac{3 p_1}{p_1+p_2}\right)}\right)^{4\lambda/3}\right).
\end{align*}

Since $\frac{p_2}{p_1+p_2} \ge \frac{4\lambda + 2}{4\lambda + 3}$, 
it follows that 
$\frac{p_1}{p_1+p_2} \le \frac{1}{4\lambda + 3} < \frac{1}{3}.$
Consider the function
$f_\lambda : [0,1/3) \to \R$, defined by 
$$f_\lambda(p) = -p + \frac{1}{4\lambda+3}
\left(\frac{4\lambda}
{(4\lambda+3)\left(1 - 3 p\right)}\right)^{4\lambda/3}.$$

We have 
$$f_\lambda'(p) = -1 + 
\left(\frac{4\lambda}
{(4\lambda+3)\left(1 - 3 p\right)}\right)^{1 + (4\lambda/3)}
\text{, and }
f_\lambda'(p) = 0 \iff p = \frac{1}{4\lambda + 3}.$$

Note that $f_\lambda'(p)$ is a strictly increasing function, with 
$f_\lambda'(0) < 0$ and $f_\lambda'(p) \to +\infty$ as $p \to 1/3.$ Thus
for any $p \in [0,1/3)$ we have 
$$f_\lambda(p) \ge f_\lambda\left(\frac{1}{4\lambda + 3}\right) = 0.$$

Hence, by Lemma~\ref{lemalema4}, $\frac{1}{2}T + \frac{1}{2}B$ is an optimal strategy of Player~1. Now, it remains to verify that the partial differential equation \eqref{mc3ez8} holds. We have
\begin{align*}
&\lambda \overline w_{\lambda}(\overline p) + 
\lambda(p_1+p_2+p_3) - \texttt{Val}_{I\times J} 
\left[\langle \overline p * q(i,j), 
\nabla \overline w_\lambda(\overline p) \rangle \right] \\
=& 
\left(- \lambda (p_1+p_2) 
\left(-1 + \frac{ (4 \lambda)^{4\lambda/3}}
{(1+\lambda) (3 + 4\lambda)^{1+(4\lambda/3)}} 
\left(\frac{3 p_2}{p_1+p_2} - 2\right)^{-4\lambda/3}\right) -
\lambda p_3\right) + \lambda(p_1+p_2+p_3) \\
-& \left(\frac{p_2}{4} - \frac{p_1}{2}\right) \frac{1}{1+\lambda}
\left(\frac{ 4\lambda (p_1+p_2)}
{(4\lambda+3)(p_2 - 2 p_1)}\right)^{1+(4\lambda/3)} = 0.
\end{align*}

This concludes the proof of assertion~1. 

The proof of assertion~2 is analogous; we will therefore provide only a general outline.

Denote
$I = \{T, M, B\}, J = \{L, M, R, Q\}$.
Also, denote by $\overline p = (p_1,p_2,p_3,p_4)$ a probability distribution over the states of $G_{h,\lambda}^+$, where
$$p_i = 
\begin{cases}
\text{the probability that the initial state is } \omega_4, &\text{if }i=1;\\
\text{the probability that the initial state is } \omega_5, &\text{if }i=2;\\
\text{the probability that the initial state is } \omega_6^*, &\text{if }i=3;\\
\text{the probability that the initial state is } 0^*, &\text{if }i=4.
\end{cases}$$

Consider the equation (in $v(\overline p)$)
\begin{equation}
\lambda v(\overline p) = -\lambda(p_1+p_2+p_3) + \texttt{Val}_{I\times J} 
\left[A\right],
\label{65F32e}
\end{equation}
where $A$ is the one-shot matrix game
\begin{equation*}
\begin{tabular}{|c|c|c|c|c|}
\hline
& $L$ & $M$ & $R$ & $Q$ \\
\hline
$T$ & $\frac{2 p_2}{3} \left(
\frac{\partial v}{\partial p_1} - 
\frac{\partial v}{\partial p_2}
\right)$ & 
$- p_1 \left(
\frac{\partial v}{\partial p_1} - 
\frac{\partial v}{\partial p_2}
\right)$ &
$- p_1 \left(
\frac{\partial v}{\partial p_1} - 
\frac{\partial v}{\partial p_2}
\right)$&
$-p_1 \frac{\partial v}{\partial p_1} - 
p_2 \frac{\partial v}{\partial p_2} + p_2$\\
\hline
$M$ & $- p_1 \left(
\frac{\partial v}{\partial p_1} - 
\frac{\partial v}{\partial p_2}
\right)$ & 
$\frac{2 p_2}{3} \left(
\frac{\partial v}{\partial p_1} - 
\frac{\partial v}{\partial p_2}
\right)$ &
$- p_1 \left(
\frac{\partial v}{\partial p_1} - 
\frac{\partial v}{\partial p_2}
\right)$&
$-p_1 \frac{\partial v}{\partial p_1} - 
p_2 \frac{\partial v}{\partial p_2} + p_2$\\
\hline
$B$ & $- p_1 \left(
\frac{\partial v}{\partial p_1} - 
\frac{\partial v}{\partial p_2}
\right)$ & 
$- p_1 \left(
\frac{\partial v}{\partial p_1} - 
\frac{\partial v}{\partial p_2}
\right)$ &
$\frac{2 p_2}{3} \left(
\frac{\partial v}{\partial p_1} - 
\frac{\partial v}{\partial p_2}
\right)$&
$-p_1 \frac{\partial v}{\partial p_1} - 
p_2 \frac{\partial v}{\partial p_2} + p_2$\\
\hline
\end{tabular} 
\end{equation*}

Now, it remains to consider two cases 
$\bigl(\frac{p_2}{p_1+p_2} < \frac{9\lambda + 6}{9\lambda + 8}$ and
$\frac{p_2}{p_1+p_2} \ge \frac{9\lambda + 6}{9\lambda + 8}\bigr)$ to verify that 
$$\overline w_\lambda(\overline p) := 
(p_1+p_2) w_\lambda^+ \left(\frac{p_2}{p_1+p_2}\right) + p_3$$
is a classical solution of \eqref{65F32e}.
\end{proof}


Finally, we are ready to prove Theorem~\ref{thh2}. 

\subsubsection{Combining two ``half-games'' 
into a single one}
\label{rPART4}

\begin{proof}[Proof of Theorem~\ref{thh2}]
Recall that we denote $p = (p_1, p_2, p_4, p_5)$, where $p_i$ is the probability that the initial state is $\omega_i$. We also denote $p_- = (0,1,0,0)$ and $p_+ = (0,0,0,1)$.

By Lemma~\ref{dwjbckdsjb}, we have
$$
\begin{cases*}
  v_{h, \lambda} (p_+) = v_{h, \lambda}^+
  (v_{h, \lambda} (p_-),1) = 
  (1-v_{h, \lambda} (p_-)) v_{h, \lambda}^+(1) + 
  v_{h, \lambda} (p_-); \\
  v_{h, \lambda} (p_-) = v_{h, \lambda}^-
  (v_{h, \lambda} (p_+),1) = (1+v_{h, \lambda} (p_+))
  v_{h, \lambda}^-(1) + 
  v_{h, \lambda} (p_+).
\end{cases*}
$$

By solving the system for the variables $v_{h, \lambda} (p_+)$ and $v_{h, \lambda} (p_-)$, we obtain
\begin{equation}
\begin{cases*}
  v_{h, \lambda} (p_+) = 
  - \frac{v_{h, \lambda}^-(1) + v_{h, \lambda}^+(1) - 
  v_{h, \lambda}^-(1) v_{h, \lambda}^+(1)}
  {v_{h, \lambda}^-(1) - v_{h, \lambda}^+(1) - 
  v_{h, \lambda}^-(1) v_{h, \lambda}^+(1)}; \\
  v_{h, \lambda} (p_-) = 
  - \frac{v_{h, \lambda}^-(1) + v_{h, \lambda}^+(1) + 
  v_{h, \lambda}^-(1) v_{h, \lambda}^+(1)}
  {v_{h, \lambda}^-(1) - v_{h, \lambda}^+(1) - 
  v_{h, \lambda}^-(1) v_{h, \lambda}^+(1)}.
\end{cases*}
\label{qds42}
\end{equation}

By Lemma~\ref{MainLemma}, we have 
\begin{align*}
&v_{0, \lambda}^-(1) = -1 +
\frac{(4 \lambda)^{4\lambda/3}}
{(1+\lambda) (3 + 4\lambda)^{1+(4\lambda/3)}} 
\left(3 \cdot 1 - 2\right)^{-4\lambda/3};\\
&v_{0, \lambda}^+(1) = 1 - 
\frac{2 (18 \lambda)^{9\lambda/8}}
{(1+\lambda) (8 + 9\lambda)^{1+(9\lambda/8)}} 
\left(8 \cdot 1 - 6\right)^{-9\lambda/8}.
\end{align*}

Hence 
\begin{align}
\label{qsc5fg}
\begin{split}
&\lim_{\lambda \to 0} 
v_{0, \lambda}^-(1) = -1 + \lim_{\lambda \to 0}
\frac{(4 \lambda)^{4\lambda/3}}
{(1+\lambda) (3 + 4\lambda)^{1+(4\lambda/3)}} = 
- \frac{2}{3};\\
&\lim_{\lambda \to 0} 
v_{0, \lambda}^+(1) = 1 - \lim_{\lambda \to 0}
\frac{2 (18 \lambda)^{9\lambda/8} \cdot 2^{-9\lambda/8} }
{(1+\lambda) (8 + 9\lambda)^{1+(9\lambda/8)}} 
= \frac{3}{4}.
\end{split}
\end{align}

By combining \eqref{qds42} and \eqref{qsc5fg}, we obtain
$$
\lim_{\lambda \to 0} v_{0,\lambda} (p_+) = 7/11 
\quad \text{ and } \quad
\lim_{\lambda \to 0} v_{0,\lambda} (p_-) = -5/11.
$$

If $p \in [0,1]$ and the initial state is $p(+) = (0, 0, 1-p, p)$, then by Lemmas~\ref{dwjbckdsjb} and \ref{MainLemma}, we have
\begin{align*}
\lim_{\lambda \to 0} v_{0,\lambda} (p(+)) =
\lim_{\lambda \to 0} v_{0,\lambda}^+\left(-\frac{5}{11},p\right)=
\frac{16}{11} \lim_{\lambda \to 0} 
v_{0,\lambda}^+(p) - \frac{5}{11}
=
\begin{cases}
  \frac{16}{11} \cdot \frac{3}{4} - \frac{5}{11} = \frac{7}{11},
  &\text{if } p \ge 3/4; \\
  \frac{16}{11} p - \frac{5}{11}, 
  &\text{if } p < 3/4.
\end{cases}
\end{align*}

If $p\in [0,1]$ and the initial probability distribution over the states is $p(-) = (1-p, p, 0, 0)$, then by Lemmas~\ref{dwjbckdsjb} and 
\ref{MainLemma}, we have
\begin{align*}
\lim_{\lambda \to 0} v_{0,\lambda}(p(-)) =
\lim_{\lambda \to 0} 
v_{0,\lambda}^-\left(\frac{7}{11},p\right) =
\frac{18}{11} \lim_{\lambda \to 0} 
v_{0,\lambda}^-(p) + \frac{7}{11} = 
\begin{cases}
  - \frac{18}{11} \cdot \frac{2}{3} + \frac{7}{11} = -\frac{5}{11},
  &\text{if } p \ge 2/3; \\
  - \frac{18}{11} p + \frac{7}{11}, 
  &\text{if } p < 2/3.
\end{cases}
\end{align*}

Finally, for any initial probability distribution $p = (p_1, p_2, p_4, p_5)$ over the states, we have
$$\lim_{\lambda \to 0} v_{0,\lambda}(p) =
\begin{cases*}
  (p_1+p_2) \lim_{\lambda \to 0} v_{0,\lambda}(\overline p(-))
   + (p_4+p_5) \lim_{\lambda \to 0} v_{0,\lambda}(\overline p(+))
  \hfill,
  \text{if } p_1 + p_2 > 0 \text{ and } p_3 + p_4 > 0; \\
  (p_1+p_2) \lim_{\lambda \to 0} v_{0,\lambda}(\overline p(-)), \hfill
  \text{if } p_4 + p_5 = 0; \\
  (p_4+p_5) \lim_{\lambda \to 0} v_{0,\lambda}(\overline p(+)), \hfill
  \text{if } p_1 + p_2 = 0.
\end{cases*},
$$
where
$$\overline p(-) = 
\left(\frac{p_1}{p_1+p_2}, \frac{p_2}{p_1+p_2}, 0, 0\right) 
\quad \text{ and } \quad
\overline p(+) = \left(0, 0, \frac{p_4}{p_4+p_5}, \frac{p_5}{p_4+p_5}\right).$$
This proves the existence of $\lim_{\lambda \to 0} v_{0,\lambda}$.
\end{proof}

\subsubsection{A discussion of the proof of Theorem~\ref{thh2}}
\label{rPART5}

Here, we give several remarks discussing the above proof.


\begin{remark}
  A consequence of the proof is that the limit $\lim\nolimits_{\lambda \to 0} v_{0,\lambda}(p)$ is uniform in $p$. In contrast, even the pointwise limit $\lim\nolimits_{\lambda \to 0} v_{1,\lambda}(p)$ does not exist. \demo
\end{remark}

\begin{remark}
  From \S\ref{rPART4}, we obtain that
\[\lim\limits_{\lambda \to 0} v_{0,\lambda} (p) =
\begin{cases}
  -\frac{5}{11} p_1 - \frac{5}{11} p_2 + 
  \frac{7}{11}p_4 + \frac{7}{11} p_5, &
  \text{if } \frac{p_2}{p_1+p_2} \ge \frac{2}{3} \text{ or } p_1 = p_2 = 0,
  \text{ and }\\ 
  &\phantom{if } 
  \frac{p_5}{p_4+p_5} \ge \frac{3}{4} \text{ or } p_4 = p_5 = 0; \\
  \frac{7}{11} p_1 - p_2 + 
  \frac{7}{11} p_4 + \frac{7}{11} p_5, &
  \text{if } \frac{p_2}{p_1+p_2} < \frac{2}{3} \text{ or } p_1 = p_2 = 0,
  \text{ and }\\ 
  &\phantom{if } 
  \frac{p_5}{p_4+p_5} \ge \frac{3}{4} \text{ or } p_4 = p_5 = 0; \\
  -\frac{5}{11} p_1 - \frac{5}{11} p_2 - 
  \frac{5}{11} p_4 + p_5, &
  \text{if } \frac{p_2}{p_1+p_2} \ge \frac{2}{3} \text{ or } p_1 = p_2 = 0,
  \text{ and }\\ 
  &\phantom{if } 
  \frac{p_5}{p_4+p_5} < \frac{3}{4} \text{ or } p_4 = p_5 = 0; \\
  \frac{7}{11} p_1 - p_2 - \frac{5}{11} p_4 + p_5,
  &
  \text{if } \frac{p_2}{p_1+p_2} < \frac{2}{3} \text{ or } p_1 = p_2 = 0,
  \text{ and }\\ 
  &\phantom{if } 
  \frac{p_5}{p_4 + p_5} < \frac{3}{4} \text{ or } p_4 = p_5 = 0.
\end{cases}\]\demo
\end{remark}

\begin{remark}
In this paper, we assumed that the stage duration does not depend on the stage number. In fact, the statement of Theorem~\ref{thh2} remains valid in a more general setting where the stage duration may depend on the stage number. This more general version of Theorem~\ref{thh2} is formulated and proven in the author's PhD thesis; see \cite{Nov25a}.\demo
\end{remark}

\begin{remark}
In this (somewhat informal) remark, we try to explain why $G^\lambda_1$ has no asymptotic value in discrete time (when each stage has duration $h = 1$), but has an asymptotic value in continuous time (when the stage duration $h$ approaches $0$).

First, we consider the discrete case: by waiting sufficiently long, Player~1 (respectively Player~2) can ensure that the probability $\hat p$ of being in state $\omega_1$ (respectively $\omega_4$) is arbitrarily close to $1$. The expected number of stages required to reach any fixed $\hat p$ is bounded, so the payoff during these stages becomes negligible as $\lambda \to 0$. Due to this rapid alternation between the players, an oscillation arises as $\lambda$ tends to $0$. For more details, see \cite{Zil16} and \cite{RenZil20}; see also \cite{SorVig15}.

In contrast, in continuous time it is impossible for Player~1 (respectively Player~2) to reach $\omega_1$ (respectively $\omega_4$) with probability greater than $2/3$ (respectively $3/4$). To see this, we consider the POMDPs $\widetilde G_h^-$ and $\widetilde G_h^+$ introduced in \S\ref{rPART3}. 

We first consider $\widetilde G_h^-$, where the initial state is $\omega_2$ with probability $p$ and $\omega_1$ with probability $1-p$. The player first plays $C$ to quickly reduce his belief $\widetilde p$ that the current state is $\omega_2$, and then switches to $Q$. Remark~\ref{irenNeman2} states that after playing $C$, the player's updated belief $\widetilde p$ that the current state is $\omega_2$ is
$$E [\widetilde p] = \frac{1}{2} (p + h - h p) + \frac{1}{2} \left( p - \frac{h p}{2}\right) =p + \frac{h}{4} (2 - 3 p).$$
Thus, after playing $C$, the player's belief $\widetilde p$ moves closer to the critical value $2/3$. 

As $h \to 0$, the belief $\widetilde p$ evolves more smoothly and the drift near $p=2/3$ vanishes, making it increasingly difficult for $\widetilde p$ to cross that threshold. Hence, as $h\to 0$, it is never profitable for the player to choose $C$ when $p \le 2/3$. If $p > 2/3$, then the player may play $C$ for some time to reduce $\widetilde p$. As $\lambda$ becomes smaller, he can wait longer, allowing $\widetilde p$ to approach $2/3$ more closely.

The analysis of $\widetilde G_h^+$ is analogous. Here, the initial state is $\omega_5$ with probability $p$ and $\omega_4$ with probability $1-p$. Remark~\ref{irenNeman2} states that after playing $C$, the player's updated belief $\widetilde p$ that the current state is $\omega_5$ is
$$E [\widetilde p] = \frac{2}{3} (p + h - h p) + \frac{1}{3} \left(p - \frac{2 h p}{3}\right) = p + \frac{h}{9} (6 - 8 p).$$

Thus, after playing $C$, the player's belief $\widetilde p$ moves closer to the critical value $3/4$ (see Figure~\ref{belief_a}).

As in the previous case, when $h \to 0$ the drift near $p = 3/4$ vanishes, so choosing $C$ is unprofitable for $p \le 3/4$; if $p > 3/4$, the player may still play $C$ to reduce $\widetilde p$ and let it approach $3/4$ more closely. 

Thus the game is quickly ``absorbed'', and, as a result, no oscillation occurs as $\lambda$ tends to $0$. \demo

\end{remark}

\begin{figure}[h!]
\centering
\captionsetup[subfigure]{justification=centering}

\begin{subfigure}[t]{0.48\textwidth}
\centering
\begin{tikzpicture}[x=7.5cm, y=1cm, >=Stealth, thick]

  \draw[line width=1pt] (0,0) -- (1,0);

  \foreach \x/\lab in {0/0, 0.6666667/{\tfrac{2}{3}}, 1/1} {
    \draw[line width=1.2pt] (\x,0.12) -- (\x,-0.12);
    \filldraw[black] (\x,0) circle (3pt); 
    \node[below=7pt, font=\large] at (\x,0) {$p=\lab$};}

  \draw[->, line width=1pt] (0.12,0.4) -- (0.60,0.4);
  \draw[->, line width=1pt] (0.94,0.4) -- (0.73,0.4);

\end{tikzpicture}
\caption{$\widetilde G_h^-$: drift of $\widetilde p$ toward $\tfrac{2}{3}$.}
\label{belief_a}
\end{subfigure}\hfill
\begin{subfigure}[t]{0.48\textwidth}
\centering
\begin{tikzpicture}[x=7.5cm, y=1cm, >=Stealth, thick]

  \draw[line width=1pt] (0,0) -- (1,0);

  \foreach \x/\lab in {0/0, 0.75/{\tfrac{3}{4}}, 1/1} {
    \draw[line width=1.2pt] (\x,0.12) -- (\x,-0.12);
    \filldraw[black] (\x,0) circle (3pt); 
    \node[below=7pt, font=\large] at (\x,0) {$p=\lab$};}

  \draw[->, line width=1pt] (0.12,0.4) -- (0.72,0.4);
  \draw[->, line width=1pt] (0.94,0.4) -- (0.78,0.4);

\end{tikzpicture}
\caption{$\widetilde G_h^+$: drift of $\widetilde p$ toward $\tfrac{3}{4}$.}
\label{belief_b}
\end{subfigure}

\caption{Belief dynamics after action $C$ in $\widetilde G_h^-$ and $\widetilde G_h^+$. Arrows indicate the expected drift of the posterior $\widetilde p$ toward the attracting point. Note that a similar phenomenon can be observed in the MDP considered in Example~\ref{exxxx1} (see Figure~\ref{belief_update}).}
\end{figure}

We conclude with a conjecture and several open questions.

  \begin{conjecture}
    \label{conj1}
    For the game $G_1$ from Theorem~\ref{thh2}, we have
    \begin{enumerate}
      \item For any $h \in (0,1]$, the limit 
      $\lim\limits_{\lambda \to 0} v_{h, \lambda}$ does not exist;
      \item We have 
      $\left|\limsup\limits_{\lambda \to 0} 
      v_{h, \lambda}(p) - 
      \liminf\limits_{\lambda \to 0} 
      v_{h, \lambda}(p)\right| \to 0$ as $h$ approaches $0$, uniformly in $p$.
    \end{enumerate}
  \end{conjecture}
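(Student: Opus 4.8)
The plan is to reduce both assertions to the behaviour of the two state-blind ``half-games'' $a := v^-_{T_\infty,\lambda}(0,1)$ and $b := v^+_{T_\infty,\lambda}(0,1)$, exactly as in the proof of Theorem~\ref{thh2}. By Lemma~\ref{dwjbckdsjb} the exact identity \eqref{qds42} holds for \emph{every} fixed $\lambda$ and \emph{every} fixed partition $T_\infty$, so $v_{T_\infty,\lambda}(p_\pm)$ is a fixed rational function of $(a,b)$ whose denominator stays bounded away from $0$ (it tends to $-11/12$). Hence $\lambda\mapsto v_{T_\infty,\lambda}(p_\pm)$ oscillates if and only if $\lambda\mapsto(a,b)$ oscillates, and its oscillation amplitude is controlled, up to a fixed Lipschitz constant, by those of $a$ and $b$; the same is true for general $p$ through the affine transfer formulas of Lemma~\ref{dwjbckdsjb} and the combination used at the end of \S\ref{rPART4}. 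Thus it suffices to prove both statements at the level of the half-games.

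For assertion~1 I would fix the partition and show that at least one half-game value fails to converge as $\lambda\to0$. The mechanism is the log-periodic oscillation driving the non-existence of $\lim_\lambda\widetilde v_\lambda$ in \cite{RenZil20}: starting from belief $p=1$ the controlling player can only push his belief down through the discrete multiplicative levels produced by the signals $\alpha,\beta$ (contraction factors $1-h/2$ on the minus side, $1-\tfrac{2h}{3}$ on the plus side), and the discounted optimal-stopping value against such a self-similar belief ladder is periodic in $\log\lambda$ up to vanishing corrections. Concretely I would work from the optimality equations \eqref{eqE1}--\eqref{eqE2}, which hold verbatim for each fixed $h$, isolate the renewal structure near $p=1$, and exhibit two sequences $\lambda_n\to0$ — shifted by the incommensurate logarithmic periods of the two sides — along which $a$, respectively $b$, tend to different limits. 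For non-uniform partitions one first settles the uniform case $h_n\equiv h$ and then transfers the conclusion using partition-comparison estimates in the spirit of Propositions~\ref{002}--\ref{003}.

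For assertion~2 the plan is to combine Theorem~\ref{thh2} with a convergence estimate that is \emph{uniform in} $\lambda$. Write $V_\lambda(p):=\lim_{\sup_i h_i\to0}v_{T_\infty,\lambda}(p)$, which exists by Proposition~\ref{thh3} applied to each half-game together with \eqref{qds42}; by Theorem~\ref{thh2} the limit $\lim_{\lambda\to0}V_\lambda(p)$ exists, so $\limsup_\lambda V_\lambda=\liminf_\lambda V_\lambda$. If one produces a modulus $\varepsilon$ with $\varepsilon(\delta)\to0$ as $\delta\to0$ and
\[
\sup_{p}\,\sup_{\lambda\in(0,1]}\bigl|v_{T_\infty,\lambda}(p)-V_\lambda(p)\bigr|\le\varepsilon\bigl(\textstyle\sup_i h_i\bigr),
\]
then $\bigl|\limsup_\lambda v_{T_\infty,\lambda}(p)-\liminf_\lambda v_{T_\infty,\lambda}(p)\bigr|\le2\,\varepsilon(\sup_i h_i)$ uniformly in $p$, which is precisely assertion~2. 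By \eqref{qds42} and the Lipschitz dependence of $v$ on $(a,b)$ it is enough to prove such a uniform estimate for the two half-games separately.

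The hard part will be this uniform-in-$\lambda$ half-game estimate: Proposition~\ref{thh3} and the viscosity-solution machinery yield $v^\pm_{T_\infty,\lambda}\to w^\pm_\lambda$ only for each \emph{fixed} $\lambda$, and the convergence degrades exactly in the regime $\lambda\to0$ where the oscillation of assertion~1 lives. To control it I would use the explicit continuous-time limits $w^\pm_\lambda$ of Lemma~\ref{MainLemma} as test functions: being $\mathcal C^1$ classical solutions of the relevant PDE (as verified in \S\ref{rPART3}), they make the discrete Shapley operator of the game with stage duration consistent up to an error bounded in terms of $\sup_i h_i$ and the modulus of continuity of $\nabla w^\pm_\lambda$, after which a comparison argument for the Shapley operator gives a bound on $\|v^\pm_{T_\infty,\lambda}-w^\pm_\lambda\|$. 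The naive version of this argument loses a factor $1/\lambda$ (the effective horizon), so the real work is to remove that loss and obtain a modulus genuinely uniform in $\lambda$, which should be possible by exploiting the contraction of the belief dynamics toward the thresholds $2/3$ and $3/4$. For assertion~1 the main obstacle is the opposite: the additive $O(h)$ corrections in \eqref{eqE1}--\eqref{eqE2} perturb the exact self-similarity that makes the $h=1$ argument of \cite{RenZil20} work, so one must show that the oscillation survives these perturbations for every fixed $h>0$ rather than being smoothed out.
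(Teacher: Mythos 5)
You are attempting to prove a statement that the paper itself leaves open: Conjecture~\ref{conj1} appears in the section of conjectures and questions, and the paper contains no proof of it, so your proposal can only be judged on its own merits. On those merits it is a sensible research plan — the half-game decomposition of \S\ref{rPART1}--\S\ref{rPART4} is indeed the natural attack route — but it is not a proof. Both load-bearing steps are left open by your own admission: for assertion~1, showing that the log-periodic oscillation of \cite{RenZil20} survives the $O(h)$ perturbation of the belief ladder for every fixed $h>0$; for assertion~2, the uniform-in-$\lambda$ estimate $\sup_{\lambda}\|v_{T_\infty,\lambda}-V_\lambda\|\le\varepsilon(\sup_i h_i)$, where you yourself note that the naive Shapley-operator comparison loses a factor $1/\lambda$, which is fatal precisely in the regime $\lambda\to0$ the conjecture is about. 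These two steps essentially \emph{are} the conjecture; the surrounding reductions are the easy part.

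Beyond incompleteness, one claimed step is wrong as stated. The identity \eqref{qds42} is \emph{not} exact ``for every fixed $\lambda$ and every fixed partition $T_\infty$'': deriving the system behind it requires that the continuation value upon switching sides (which happens at a random stage $n$) equals $v_{T_\infty,\lambda}(p_\mp)$, i.e. that the game restarted on the shifted partition $\{t_{n+k}-t_n\}_k$ has the same value as the original one. That holds for uniform partitions by stationarity, and in the limit $\sup_i h_i\to0$ (which is how the paper uses it in \S\ref{rPART4}), but not for a general fixed $T_\infty$ — and assertion~1 quantifies over all partitions. Your proposed repair, transferring from uniform to general partitions ``using partition-comparison estimates in the spirit of Propositions~\ref{002}--\ref{003}'', cannot work: those propositions are specific to perfect observation of the state, and Remark~\ref{rxxx3R} (via \cite[Example~2]{Nov24a}) shows exactly that no such comparison holds once observation is imperfect — indeed $\lim_{\sup_i h_i\to0}v_{T_\infty,\lambda}(p)\neq v_{1,\lambda'}(p)$ for \emph{every} $\lambda'$. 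Finally, even granting \eqref{qds42}, Lipschitz dependence of $v$ on $(a,b)=(v^-_{T_\infty,\lambda}(0,1),v^+_{T_\infty,\lambda}(0,1))$ gives the wrong direction for assertion~1: it bounds the oscillation of $v$ by that of $(a,b)$, whereas you need that oscillation of $(a,b)$ \emph{forces} oscillation of $v$, which requires a nondegeneracy argument for the rational map (plus a uniform-in-$\lambda$ lower bound on the denominator $a-b-ab$, asserted but not proved; a priori $a,b\to0$ along some sequence would make it vanish).
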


\begin{question}
  Can we say that for any finite state-blind stochastic game, the limit $\lim\limits_{\lambda \to 0} v_{0,\lambda}$ exists?
\end{question}

\begin{question}
  Can we say that for any finite stochastic game with deterministic public signals, the existence of
  $\lim_{\lambda \to 0} v_{1,\lambda}$ implies the existence of $\lim_{\lambda \to 0} v_{0,\lambda}$?
\end{question}

\begin{appendices}
\section{Computation of $v_{0,\lambda}(p)$ for Example~\ref{exxxx1}}
\label{appenA}

\begin{proof}
Denote by $A_n$ and $B_n$ the events defined by
\begin{align*}
A_n &:= \{ \text{The current state is } S_1, \text{ after action } 
C \text{ has been played } n \text{ times} \};\\
B_n &:= \{ \text{The current state is } S_2, \text{ after action } 
C \text{ has been played } n \text{ times} \}.
\end{align*}

We consider several cases.

\noindent\textbf{Case 1:} $p_2 \ge p_1$.

From the equations
\begin{equation}
  \label{eqqq800}
  P(A_{n+1}) = (1-h) P(A_n) + h P(B_n) \;\text{ and }\; 
  P(B_{n+1}) = (1-h) P(B_n) + h P(A_n)
\end{equation}
we deduce that $P(A_{n+1}) < P(B_{n+1}) < P(A_n)$ if $P(A_n) > P(B_n)$, and $P(B_{n+1}) < P(A_{n+1}) < P(B_n)$ if $P(B_n) > P(A_n)$. Since $p_2 = P(B_0) \ge P(A_0) = p_1$, the above inequalities imply that $P(B_n)$ is maximal when $n=0$. Thus an optimal strategy is to always play $Q$. A direct computation shows that $v_{0,\lambda} (p) = \frac{p_2-p_1}{1+\lambda},$ where $p = (p_1,p_2,0,0) \text{ with } p_2 \ge p_1.$

\noindent \textbf{Case 2:} $p_1 > p_2$.

Assume that $P(A_n) > P(B_n)$ for some $n$ and $h \le 1/2$. In this case, by \eqref{eqqq800} we have
\begin{align*}
&P(A_n) > P(B_n) \iff (1-2h)P(A_n) > (1-2h)P(B_n) \iff \\
\iff &(1-h) P(A_n) + h P(B_n) > (1-h) P(B_n) + h P(A_n) \iff 
P(A_{n+1}) > P(B_{n+1}).
\end{align*}
Since $P(A_0) = p_1 > p_2 = P(B_0)$, 
this implies that $P(A_n) > P(B_n)$ for all $n \in \N^*$. Thus the only optimal strategy is to always play $C$, and we have $v_{0,\lambda}(p) = 0,$ where $p = (p_1,p_2,0,0)$ with $p_1 > p_2.$

From cases 1 and 2, it follows that
$$v_{0,\lambda}(p) = \frac{1}{1+\lambda} \max\{0, p_2-p_1\}. \qedhere$$
\end{proof}
\end{appendices}

\section{Acknowledgements}
The author is grateful to his research advisor Guillaume Vigeral for constant attention to this work. The author is also grateful to Sylvain Sorin, Yijun Wan and Bruno Ziliotto for useful discussions. The author is grateful to an anonymous reviewer for helpful comments. This work was done while the author was a PhD student at CEREMADE, Université Paris-Dauphine.

\bibliographystyle{apalike2}
\bibliography{ref}
  
 \end{document}